

\documentclass{daj}

\usepackage{todonotes}
\usepackage{enumerate}
\usepackage{color}
\usepackage{mathrsfs}
\usepackage{bm}
\usepackage[utf8]{inputenc}
\inputencoding{utf8}
\usepackage{comment}
\usepackage{graphicx}
\usepackage[justification=centering]{caption}
\usepackage{amsthm,amssymb,amsmath}
\usepackage{subfigure}

\dajAUTHORdetails{%
  title = {Semialgebraic Methods and Generalized Sum-Product Phenomena}, 
  author = {Yifan Jing, Souktik Roy, and Chieu-Minh Tran},
  plaintextauthor = {Yifan Jing, Souktik Roy, Chieu-Minh Tran},
    %
    %
    %
    %
    %
   %
  keywords = {expanding polynomials, semialgebraic geometry,  o-minimality},
}   

\dajEDITORdetails{%
   year={2022},
   number={18},
   received={25 December 2020},   
   revised={27 Februrary 2022},    
   published={15 December 2022},  
   doi={10.19086/da.55555},       
}   


\newtheorem{theorem}{Theorem}[section]
\newtheorem{lemma}[theorem]{Lemma}
\newtheorem{proposition}[theorem]{Proposition}
\newtheorem{corollary}[theorem]{Corollary}

\theoremstyle{definition}
\newtheorem{definition}[theorem]{Definition}

\newenvironment{remark}
  {\pushQED{\qed}\remarkx}
  {\popQED\endremarkx}


\newenvironment{fact}
  {\pushQED{\qed}\factx}
  {\popQED\endfactx}
\def\RR{\mathbb{R}}
\def\QQ{\mathbb{Q}}
\def\NN{\mathbb{N}}
\def\ZZ{\mathbb{Z}}
\def\CC{\mathbb{C}}
\def\S{\mathcal{S}}
\def\C{\mathscr{C}}
\def\D{\mathscr{D}}
\def\sM{\mathscr{M}}
\DeclareMathOperator{\fchar}{char}
\newcommand{\meno}{}
\newcommand{\ir}{(i)}
\newcommand{\iir}{(ii)}
\newcommand{\iiir}{(iii)}
\newcommand{\ivr}{(iv)}
\newcommand{\vr}{(v)}

\newenvironment{claim}
  {\pushQED{\qed}\claimx}
  {\popQED\endclaimx}


\begin{document}

\begin{frontmatter}[classification=text]


\title{Semialgebraic Methods and Generalized Sum-Product Phenomena\footnote{2020 Mathematics Subject Classification: 14P10, 03C64, 05D10}} 


\author[yifan]{Yifan Jing\thanks{Supported by Ben Green's Simons Investigator Grant, ID:376201.}}
\author[souktik]{Souktik Roy}
\author[minh]{Chieu-Minh Tran\thanks{Supported by Anand Pillay's NSF Grant-2054271.}}


\begin{abstract}
For a bivariate $P(x,y) \in \RR[x,y]\setminus (\RR[x] \cup \RR[y])$, our first result shows that  for all finite $A \subseteq \RR$, $|P(A,A)|\geq \alpha|A|^{5/4}$ with $\alpha =\alpha(\deg P) \in \RR^{>0}$ unless
$$ P(x,y)=f(\gamma u(x)+\delta u(y))  \text{ or }  P(x,y)=f(u^m(x)u^n(y)) $$
for some univariate $f, u \in \RR[t]\setminus \RR$,  constants $\gamma, \delta \in \RR^{\neq 0}$, and $m, n\in  \NN^{\geq 1}$.
This resolves the symmetric nonexpanders classification problem proposed by de Zeeuw. Our second  and third results are sum-product type theorems for two polynomials, generalizing the classical result by Erd\H os and Szemer\'edi as well as a theorem by Shen. We also obtain similar results for $\CC$, and from this deduce results for fields  of characteristic $0$ and fields of large prime characteristic. The proofs of our results use tools from semialgebraic/o-minimal geometry.
\end{abstract}
\end{frontmatter}


\section{Introduction}

  Suppose $K$ is a field, $P(x,y)$ is a bivariate polynomial over $K$, and $P(x,y)$ has nontrivial dependence on $x$ and $y$, in other words, $P(x,y)$ is in  $K[x,y]\setminus(K[x] \cup K[y]$). For $A, B \subseteq K$, we set
$$ P(A,B): = \{ P(a,b) : a \in A, b\in B  \}.$$
We say that $P(x,y)$ is {\bf nonexpanding} over $K$ if for each $\varepsilon \in \RR^{>0}$, there is arbitrarily large $n \in \NN^{\geq 1}$ and   $A,B \subseteq K$ with $|A|=|B|=n$ such that $|P(A,B)|< n^{1+\varepsilon}$. We say that $P(x,y)$ is  {\bf symmetrically nonexpanding} over $K$ if for each $\varepsilon \in \RR^{>0}$, there is arbitrarily large $n \in \NN^{\geq 1}$ and $A\subseteq K$ with $|A|=n$ such that $|P(A,A)|< n^{1+\varepsilon}$.
A symmetric nonexpanding polynomial over $K$ (e.g., $P(x,y)=x+y$, where $A$ can be chosen to be a long arithmetic progression) is clearly nonexpanding over $K$. The converse is not true.
 A well-known result by Elekes and R\'onyai~\cite{Elekes00} provides us with a classification of  nonexpanding polynomials when $K$ is either $\RR$ and $\CC$: The polynomial $P(x,y) $ is nonexpanding  over $K$ if and only if there are nonconstant univariate  polynomials  $f, u, v$ over $K$ such that
\[
\text{either}\quad P(x,y)=f(u(x)+v(y)) \quad\text{or}\quad P(x,y)=f(u(x)v(y)).
\]
With $P(x,y) =x+y^2$, a result by Elekes, Nathanson,
and Ruzsa~\cite{ENR} implies that there is  $\alpha \in \RR^{>0}$ such that for all $A \subseteq \RR$, we have $|P(A,A)|\geq \alpha |A|^{5/4}$. In particular, $x+y^2$ is nonexpanding but not symmetrically nonexpanding. This was pointed out by de Zeeuw  in~\cite{survey}, who then asked for the general form of $P(x,y)$ for similar lower bound on $|P(A,A)|$ to hold.
Our first theorem  answers de Zeeuw's question and gives us a classification of symmetrically nonexpanding polynomials on $\RR$ and $\CC$: 


\begin{theorem}\label{thm: new main}
Suppose $K$ is either $\RR$ or $\CC$, and $P(x,y) \in K[x,y]\setminus(K[x]\cup K[y])$. Then there is $\alpha = \alpha( \deg P)$ such that exactly one of the following possibilities holds:
\begin{enumerate}[{\rm (i)}]
    \item  For all $n \in \NN$ and all $A \subseteq K$ with $|A| =n $, we have $|P(A,A)|  \geq \alpha n^{5/4}. $
   \item $P(x,y)=f(\gamma u(x)+\delta u(y)) $ where  $f$ and $u$ are nonconstant univariate polynomials over $K$, and $\gamma$ and $\delta$ are in $K^{\neq 0}$.
    \item $P(x,y)=f(u^m(x)u^n(y))$ where  $f$ and $u$ are nonconstant  univariate polynomials over $K$, and $m$ and $n$ are in $\NN^{\geq 1}$.
\end{enumerate}
Moreover, $P(x,y)$ is symmetrically nonexpanding over $K$ if an only if it satisfies either (ii) or (iii).
\end{theorem}

 
 
  Erd\H os and Szemer\'edi observed in \cite{ES83} the following sum-product phenomenon: There is $r\in \RR^{>1}$ such that for some $\alpha \in \RR^{>0}$, for all finite  $A \subseteq \ZZ$ 
with $|A|=n$, we have
$$\max\{|A+A|, |AA|\}  \geq \alpha n^r. $$ 
In the same paper, they also conjectured that the exponent $r$ can be taken arbitrarily close to $2$.
Partial results toward this conjecture with explicit $r$ were obtained by Nathanson~\cite{Nathanson} with $r =32/31$, by Ford~\cite{Ford} with $r = 16/15$, by Chen~\cite{Chen} with $r=6/5$, by Elekes~\cite{Elekes97} with $r=5/4$, by Solymosi~\cite{Solymosisoloearlier, Solymosisolo} with $r =14/11-o(1)$ and $r=4/3-o(1)$, and by Konyagin--Shkredov~\cite{KonyaginShkredov, KonyaginShkredov2}, Rudnev--Shkredov--Stevens~\cite{RudnevShkredovStevens}, and  Shakan~\cite{Shakan}, with $r = 4/3+c$ for some explicit small $c \in \RR^{>0}$. Many of the above results are in fact proven for more general contexts, where $\ZZ$ is replaced by either $\RR$ or $\CC$. As polynomials involve both addition and multiplication, there is a general expectation that  ``most'' $P(x,y) \in \RR[x,y]$ already exhibit on their own the sum-product phenomenon $|P(A,A)|\geq \alpha |A|^{r}$ with $\alpha \in \RR^{>0}$ and $r \in \RR^{>1}$. Theorem~\ref{thm: new main} can therefore be seen as a sum-product phenomenon for a single polynomial with $r=5/4$, in which we clarify the meaning of ``most'' $P(x,y) \in \RR[x,y]$.

\meno The proof of some of the above sum-product bounds also yields asymmetric versions for $\max\{|A+B|, |AB|\}$. The asymmetric counterpart of Theorem~\ref{thm: new main} for $\RR$, with a better exponent $r=4/3$, is the following strengthening of the Elekes--R\'onyai theorem by Raz, Sharir, and Solymosi: For $P(x,y) \in \RR[x,y]\setminus(\RR[x] \cup \RR[y])$, there is $\alpha = \alpha(\deg P) \in \RR^{>0}$ such that if there are finite $A,B \subseteq \RR$ with $|A|=|B|=n$ and $P(A,B) < \alpha n^{4/3}$, then  either
$P(x,y)=f(u(x)+v(y))$ or $P(x,y)=f(u(x)v(y))$ for some univariate  polynomials  $f, u, v$ over $\RR$.

\meno Somewhat surprisingly, proving Theorem~\ref{thm: new main} requires a result for two polynomials along the line of the Raz--Sharir--Solymosi theorem. That is our second theorem:
\begin{theorem}\label{thm:main}
Suppose $K$ is $\RR$ or $\CC$,  and $P(x,y)$ and $Q(x,y)$ are bivariate polynomials in $K[x,y]\setminus(K[x]\cup K[y])$. Then there is $\alpha = \alpha( \deg P, \deg Q) $ such that exactly one of the following holds:
\begin{enumerate}[{\rm (i)}]
    \item  For all $n \in \NN$ and subsets $A$ and $B$ of $K$ with $|A| =|B| =n $
        $$ \max\{ |P(A,B)|, |Q(A,B) |   \}  \geq \alpha n^{5/4}. $$
    \item $P(x,y)=f(\gamma_1 u(x)+\delta_1 v(y)) $ and $Q(x,y)=g(\gamma_2 u(x)+\delta_2 v(y))$ where  $f$, $g$, $u$, and $v$ are nonconstant univariate polynomials over $K$ and $\gamma_1$, $\gamma_2$, $\delta_1$, and $\delta_2$ are in $K^{\neq 0}$.
    \item $P(x,y)=f(u^{m_1}(x)v^{n_1}(y)) $ and $Q(x,y)=g(u^{m_2}(x)v^{n_2}(y))$  where  $f$, $g$, $u$, and $v$ are nonconstant univariate polynomials over $K$ and $m_1$, $m_2$, $n_1$, and $n_2$ are in $\NN^{\geq 1}$.
\end{enumerate}
\end{theorem}
   
With $P(x,y)=x+y$ and $Q(x,y)=xy$, it is easy to see that we are not in scenario (ii) or (iii), so we must be in scenario (i). Hence, Theorem~\ref{thm:main} implies the Erd\H os--Szemer\'edi theorem.
An early known special case of Theorem~\ref{thm:main} with $P(x,y)=x+y$ is the main result in the paper ``Algebraic methods in sum-product phenomena'' by Shen~\cite{Shen}. A variant for finite field of Shen's result was obtained by Bukh and Tsimerman \cite{BT12}.


\meno  We also obtain a generalization of Theorems~\ref{thm: new main} (except for the last assertion) for two polynomials. This is also the counterpart of Theorem~\ref{thm:main} for $A=B$: 

 \begin{theorem}\label{thm:sym}
Suppose $K$ is either $\RR$ or $\CC$. With the same  $\alpha = \alpha( \deg P, \deg Q)$ as in Theorem~\ref{thm:main},  exactly one of the following  holds:
\begin{enumerate}[{\rm (i)}]
    \item For all $n \in \NN$ and all $A \subseteq K$ with $|A| =n $
        $$ \max\{ |P(A,A)|, |Q(A,A)|    \}  \geq \alpha n^{5/4}. $$
   \item $P(x,y)=f(\gamma_1 u(x)+\delta_1 u(y)) $ and $Q(x,y)=g(\gamma_2 u(x)+\delta_2 u(y))$  where  $f$, $g$, and $u$ are univariate polynomials over $K$ and $\gamma_1$, $\gamma_2$, $\delta_1$, and $\delta_2$ are in $K^{\neq 0}$.
    \item $P(x,y)=f(u^{m_1}(x)u^{n_1}(y)) $ and $Q(x,y)=g(u^{m_2}(x)u^{n_2}(y))$ where  $f$, $g$, and $u$ are univariate polynomials over $K$ and $m_1$, $m_2$, $n_1$, and $n_2$ are in $\NN^{\geq 1}$.
\end{enumerate}
\end{theorem}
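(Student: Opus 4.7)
The plan is to derive Theorem~\ref{thm:sym} directly from Theorem~\ref{thm:main} by applying the latter twice: once to the given pair $(P,Q)$ with $B=A$, and once to the auxiliary pair $(P,P^*)$ where $P^*(x,y):=P(y,x)$. The reason this is useful is that $P^*(A,A)=P(A,A)$, so any finite set violating the conclusion of Theorem~\ref{thm:sym}~(i) immediately violates case~(i) of Theorem~\ref{thm:main} for both $(P,Q)$ and $(P,P^*)$ upon setting $B=A$.

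Assume Theorem~\ref{thm:sym}~(i) fails for some $A$ of size $n$. Applying Theorem~\ref{thm:main} to $(P,Q)$ places the pair in case~(ii) or case~(iii); I treat the additive case first, so $P(x,y)=f(\gamma_1 u(x)+\delta_1 v(y))$ and $Q(x,y)=g(\gamma_2 u(x)+\delta_2 v(y))$. Applying Theorem~\ref{thm:main} now to $(P,P^*)$, the multiplicative option (case~(iii) of the applied theorem) can be ruled out, because a single bivariate polynomial cannot simultaneously admit nontrivial additive and nontrivial multiplicative Elekes--R\'onyai decompositions: comparing $\partial_x P/\partial_y P$ computed from each representation yields $u'(x)/v'(y)$ on the one hand and a product of the shape $(m/n)(u'(x)/u(x))(v(y)/v'(y))$ on the other, and equating these forces $u$ and $v$ to be constant, contradicting the nontrivial dependence. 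Thus $(P,P^*)$ lies in case~(ii), providing
\[
P(x,y)=f_1(\tilde\gamma_1 U(x)+\tilde\delta_1 V(y)), \qquad P^*(x,y)=f_2(\tilde\gamma_2 U(x)+\tilde\delta_2 V(y)).
\]

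The final step invokes the essential uniqueness of the additive Elekes--R\'onyai decomposition: any two presentations of the same polynomial as $f'(\alpha r(x)+\beta s(y))$ with nonconstant inner polynomials must have their respective inner polynomials in $x$ (resp.\ $y$) agreeing up to an affine change of variable; this is a short consequence of the separability identity $\partial_x/\partial_y=\alpha r'(x)/(\beta s'(y))$. Applying this uniqueness lemma to the two representations of $P^*$, namely $f(\delta_1 v(x)+\gamma_1 u(y))$ and $f_2(\tilde\gamma_2 U(x)+\tilde\delta_2 V(y))$, forces $U$ affinely equivalent to $v$ and $V$ affinely equivalent to $u$; a second application to the two representations of $P$ then forces $u$ and $v$ themselves to be affinely equivalent, say $v=\lambda u+\mu$. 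Absorbing this affine adjustment into the outer polynomials $f,g$ and the coefficients $\delta_i$ yields the symmetric representation required by Theorem~\ref{thm:sym}~(ii), simultaneously for $P$ and $Q$ since they share the common inner pair $u,v$. When $(P,Q)$ originally falls in case~(iii) of Theorem~\ref{thm:main} instead, an entirely parallel argument using the multiplicative analogue of the uniqueness lemma---which has the additional ambiguity $u\mapsto \lambda u^k$---yields Theorem~\ref{thm:sym}~(iii).

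The principal obstacle is not combinatorial but structural: carefully establishing and invoking the uniqueness lemmas for additive and multiplicative Elekes--R\'onyai decompositions, which control the seemingly flexible rewriting of the inner polynomials. Once these lemmas are in hand, the two-application scheme above completes the reduction with no further combinatorial input beyond what Theorem~\ref{thm:main} already provides.
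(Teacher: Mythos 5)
Your proposal is correct and follows essentially the same strategy as the paper: apply Theorem~\ref{thm:main} twice (once to $(P,Q)$ and once to an auxiliary pair built from $P^*(x,y)=P(y,x)$, which the paper calls $\hat P$), then combine via the uniqueness Lemmas~\ref{lem: uniqueness1} and~\ref{lem: uniqueness2}. The only difference is that you pair $P^*$ with $P$ rather than with $Q$ as the paper does, so you apply the uniqueness lemma to $P$ and $P^*$ where the paper applies it to $Q$ and then $P$ --- a minor variation that yields the same affine/power relation between the two inner polynomials.
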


\meno By standard model-theoretic transfer, the analogues of Theorems~\ref{thm: new main}, \ref{thm:main}, and~\ref{thm:sym}  also hold for all algebraically closed fields and real closed fields. With some basic Galois theory, modified versions where ``exactly one of the cases holds'' is replaced by ``at least one of the cases holds'' in the statement of these three theorems can be obtained for all fields of characteristic $0$. Proposition~\ref{prop: transfer} covers both these results. Our results also have some ramifications for fields with large positive characteristic. Results in the same direction for fixed finite fields were obtained by Tao~\cite{Tao} using an algebraic regularity lemma. See Remark~\ref{rem: ending} for the related discussion.


\meno  Erd\H os--Szemer\'edi sum-product type theorems, the Elekes--R\'onyai type theorem, as well as our results can be seen as instances of a more general phenomenon: finite subsets of a one-dimensional space have expansion behavior under a ``sufficiently nice'' binary function unless the situation is ``controlled'' by a single abelian group. One reasonable interpretation of ``sufficiently nice'' is to be definable in a tame structure in the sense of model theory.
The current paper and recent qualitative generalizations of the Elekes--R\'onyai theorem by Bays, Breuillard, Chernikov, Starchenko, and Peterzil~\cite{MartinBays, ArtemSergeiESomin, ArtemSergeiESstronglymin} can be seen as two pieces of evidence for this view. (These other results are stated in the framework of points on a grid satisfying a ``sufficiently nice'' relation; see~\cite[Section~2]{survey} for a translation between the frameworks. A version of Theorem~\ref{thm:main} without an explicit exponent in the bound and in a less precise form can in fact be deduced from the result by Bays and Breuillard~\cite{MartinBays}; see Remark~\ref{Rem: Second comment}.) 
Also using tools from model theory are the works of  Hrushovski \cite{Hrushovski} and then of Breuillard, Green, and Tao \cite{BGT} on the structure of approximate groups; the  classification in the latter is not an instance of the above phenomenon but it is very close in spirit; see \cite{Hrushovski2} for a related survey.

\meno We now describe in more detail the strategies of the proofs and the structure of the paper.  After the preliminary Section~2 on logic and o-minimality and Section~3 on standard nonexpanding examples, the proof of Theorem~\ref{thm:main} is given in Section~4. This has three steps. First, we apply the strengthening \cite{RSS16} by Raz--Sharir--Solymosi of Elekes--R\'onyai's result in \cite{Elekes00}  and reduce to the case where $P(x,y)$ is of the form $f_1(u_1(x)+v_1(y))$  or $f_1(u_1(x)v_1(y))$, and $Q(x,y)$ is of the form $f_2(u_2(x)+v_2(y))$  or $f_2(u_2(x)v_2(y))$,
where $f_1$, $f_2$, $u_1$, $u_2$, $v_1$, and $v_2$ are univariate polynomials over $K$.
In the second step, we use a suitable generalization of the Szemer\'edi--Trotter Theorem~\cite{SzT} (the result for $\RR$ comes from \cite{Fox17} by Fox, Pach, Sheffer, Suk, and Zahl, and that for $\CC$ can be deduced from \cite{SolymosideZeeuw} by Solymosi and de Zeeuw)
in the same fashion as in Elekes' classical proof of sum-product phenomena ~\cite{Elekes97}. This allows us to prove that (i) happens except for a special situation. In the final third step, we  analyze the above special situation to show that either (ii) or (iii) must happen. This last step employs ideas from semialgebraic geometry/o-minimal geometry, in particular, a definable Ramsey-type result. It is quite close in principle to the proof in \cite{Shen} and the proof in \cite{RSS16}, but we use  o-minimality instead of algebraic geometry.

\meno The third step is readily generalizable to many other settings related to o-minimality. We expect that Theorem~\ref{thm:main} admits suitable generalizations as well, once the corresponding ingredients for the first and second steps are developed; in some cases, they already are. See Remark~\ref{Rem: Generalization remark} for a more detailed discussion.


\meno Finally, the proof of Theorems~\ref{thm: new main} and~\ref{thm:sym} are given in Section~5. The key idea is to apply Theorem~\ref{thm:main} for the two polynomials $P(x,y)$ and $\hat{P}(x,y)$ where we set $\hat{P}(x,y):= P(y,x)$. 
We also discuss here generalizations of the main theorems to fields other than $\RR$ and $\CC$ in this section.

\subsection*{Notation and conventions} In the rest of the paper, $m$ and $n$ range over the set $\NN =\{0, 1, \ldots \}$ of natural numbers, $k$ and $l$ range over the set $\ZZ$ of integers, $K$ is a field, and $P(x,y)$ and $Q(x,y)$ are bivariate polynomials over $K$ with has nontrivial dependence on $x$ and $y$ (i.e., $P(x,y)$ and $Q(x,y)$ are in  $K[x,y]\setminus(K[x] \cup K[y]$)),  $\RR$ and $\CC$ are the fields of real and complex numbers, and $(\RR, <)$ is the ordered field of real numbers. For $f, g: \NN \to \RR^{> 0}$, we write $f(n)=O(g(n))$ if there is some $M\ \in \RR^{>0}$ such that $|f(n)|<Mg(n)$ for all $n$. We write $f(n)\asymp g(n)$ if $f(n)=O(g(n))$ and $g(n)=O(f(n))$.

\section{Preliminaries on Logic and o-minimality}

To make the paper more accessible to readers without a background in model theory, we include here a brief introduction to the topics of logic, model theory, and semialgebraic geometry (through the lens of o-minimality).
We will keep the discussion brief, focusing on motivations and the general picture. For more systematic accounts of model theory in general and o-minimality in particular see \cite{Marker, TentZiegler} and~\cite{Lou} respectively.

\meno  The logician's notion of a {\it structure} is a generalization of the notion of a field. In this paper, the main structures are $\RR$, $\CC$, and $(\RR, <)$. The method of this paper, modulo some missing ingredients, looks ready for generalization to some {\it expansions} of $\RR$ and $\CC$ (i.e.,  structures enriching $\RR$ and $\CC$); examples include the exponential field $(\RR, \text{exp})$ of real numbers and the expansion $\CC_{\text{an}}$ of $\CC$ obtained by adding restrictions of analytic functions to bounded closed disks. So the reader may want to keep such structures in mind as well; see Remark~\ref{Rem: Generalization remark} for further details.

\meno  The notion of {\it definable set} (always assumed to be with parameters) is the corresponding generalization of the notion of {\it algebraic set} (i.e., solution set of a system of polynomial equations in a field).
In a field $K$, definable sets include algebraic sets and also sets that can be obtained from these through finitely many applications of taking intersections, unions, complements, and projections to lower dimensions. For instance, the set 
$$X = \{ (a, b) \in K^2 : \text{ there are } c, d\in K \text{ with } c^2+d^2 =a  \text{ and } c^3+d^3 =b\}$$  is definable in $K$ as it is the projection of the algebraic subset of $K^4$ defined by the system $x = z^2+t^2, y =z^3+t^3$ onto the first two coordinates. By the above description, $K^2 \setminus X$ and the projection of $K^2 \setminus X$ onto the first coordinate are also definable. Definable sets are ``solution sets'' of first-order formulas (which, in the case of fields, involve logical symbols like $\wedge, \vee, \neg, \exists, \forall$ on top of field-theoretic operations). For example, $X$ is the ``solution set'' of the first-order formula
$$\exists z\exists t (x = z^2 +t^2 \wedge y= z^3+t^3).$$
Definable sets in an ordered field $(K, <)$ can described similarly, but with the role of algebraic sets replaced by that of {\it semialgebraic sets} (i.e., solution sets of systems of polynomial inequalities). In fact, definability in $(\RR, <)$ and in $\RR$ are equivalent because  for $a$ and $b$ in $\RR$, we have  $a<b$ if and only if there is $c \in \RR \setminus\{0\}$ such that $a+c^2 =b$. For later use, a first-order formula is a first-order {\it statement} if all of its variables are attached to a quantifier $\exists$ or $\forall$.

\meno The idea of definability extends to more complicated objects like functions, families of sets, groups, etc. In a structure with underlying set $K$, a function $f: X \to K^n$ with $X \subseteq K^m$ is {\bf definable}  if the graph of $f$ is definable; in particular, this implies that $X$ and $\text{Image}(f)$ are definable. Still in the same structure, a family $(X_b)_{b\in Y}$ of subsets of $K^m$ is {\bf definable} if $Y$ is a definable subset of $K^n$ for some $n$, and the set $ X = \{(a,b) \in K^{m+n} : a \in X_b, b \in Y \}$ is definable.

\meno 
Compared to more restricted notions like algebraic sets, definable sets are very versatile. For example, if $f: X \to \RR^n$ is definable in $(\RR, <)$, then the set
$$\{ a \in X : f \text{ is differentiable with continuous derivatives at } a   \}$$
is definable simply because all the relevant concepts admit epsilon-delta definitions. Unfortunately, this flexibility often comes with the burden that definable sets are often overly complicated and resist geometric understanding.  Model theory is, to a certain extent, the study of structures where we do not have these shortcomings. Examples of such structures are $\RR$, $\CC$, and $(\RR, <)$; this is reflected by Fact~\ref{fact: 2} below. Fact~\ref{fact: 2}(i) implies that definable sets in $\RR$ are essentially well-behaved manifolds, which conceptually explain Fact~\ref{fact: 2}(ii)  below.

\begin{fact} \label{fact: 2}
Let $X$ be definable in $\RR$, equivalently, definable in $(\RR, <)$, and let $X'$ be definable in $\CC$. Then we have the following:
\begin{enumerate}
     \item[\ir] (Tarski--Seidenberg Theorem) $X$ is semialgebraic \cite[Theorem 3.3.15]{Marker}.
     \item[\iir] (Abstract cell decomposition) $X$ is a disjoint finite union $\sqcup_{i \in I} X_i$, where $X_i$ is an $\RR$-definably homeomorphic image of $(0,1)^{k_i}$ with $k_i \in \{0, \ldots, m\}$;      hence, by the Invariance of Domain Theorem,   $X_i$ is a connected open set when $k_i =m$ and an open interval when $k_i=m=1$ for each $i \in I$;  moreover, if $f: X \to \RR$ is a definable function, we can choose  $(X_i)_{i \in I}$ such that $f$ is continuous on $X_i$ for each $i \in I$ \cite[Theorem 3.2.11]{Lou}.
     \item[\iiir] (Chevalley--Tarski Theorem) $X'$ is constructible (i.e., a boolean combination of algebraic sets in $\CC$, or equivalently, a finite union of quasi-affine varieties over $\CC$) \cite[Theorem 3.2.2]{Marker}. \qedhere
\end{enumerate}
\end{fact}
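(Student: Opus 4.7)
The three parts of Fact~\ref{fact: 2} are classical results drawn from the Marker and van den Dries references, so the plan is to sketch the standard routes and flag the bi-interpretability observation that glues (i) together. First I would justify the equivalence ``definable in $\RR$ iff definable in $(\RR,<)$'' already noted in the prose: the order relation on $\RR$ is field-definable via $a<b \iff \exists c \neq 0\, (a+c^2 = b)$, so any formula in the ordered-field language can be rewritten using only the field language, while the field operations are trivially present in $(\RR,<)$.

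For (i), I would then prove quantifier elimination (QE) for the theory of real closed fields, which is the content of Tarski--Seidenberg in algorithmic form. The reduction is, by induction on formula complexity, to eliminating a single existential block $\exists y\, \phi(x,y)$ with $\phi$ quantifier-free. The crux is that a quantifier-free formula in $y$ is determined by the sign pattern, in $y$, of a finite list of polynomials $p_1(x,y),\ldots,p_k(x,y)$ whose coefficients are polynomials in $x$. The existence of a $y$ realizing a prescribed sign pattern can be encoded via Sturm-style sign conditions on the $p_i$'s and their derivatives, each of which is quantifier-free in $x$. Iterating yields QE, whence every definable subset of $\RR^n$ is semialgebraic.

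For (ii), the plan follows the classical o-minimal cell decomposition induction. The base case is immediate from o-minimality of $(\RR,<)$: definable subsets of $\RR$ are finite unions of points and open intervals. Inductively, given a definable $X \subseteq \RR^{m+1}$, I would partition $\RR^m$ into cells compatible with all ``fiber transitions'' of $X$ by combining uniform finiteness with the monotonicity theorem for definable one-variable functions. This lets one describe $X$ over each base cell $C$ as a union of graphs of definably continuous functions and the bands between them, producing cells definably homeomorphic to open boxes $(0,1)^{k_i}$. To accommodate the auxiliary function $f$, I would apply the continuity clause of the monotonicity theorem inside each top-dimensional cell and refine further, so that $f$ becomes continuous on each piece; this is the usual simultaneous strengthening proved alongside cell decomposition.

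For (iii), the plan is QE for algebraically closed fields via resultants. After reducing to a single quantifier $\exists y\, \phi(x,y)$ with $\phi$ quantifier-free in the ring language, one rewrites $\phi$ as a disjunction of conjunctions of equations $p_i(x,y)=0$ and disequations $q_j(x,y)\neq 0$. Over an algebraically closed field, the existence of a root of $p$ outside the zero set of $q$ is controlled by resultants and leading-coefficient conditions, producing a quantifier-free equivalent in the parameters $x$. Iterating gives QE, and hence every definable subset of $\CC^n$ is constructible. The main obstacle across the three parts is (ii): coordinating the inductive partition with the continuity of $f$ requires the full suite of o-minimal tools (definable choice, uniform finiteness, the monotonicity theorem), whereas (i) and (iii) reduce cleanly to elimination-theoretic calculations once the bi-interpretability remark is in place.
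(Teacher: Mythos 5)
The paper states Fact~\ref{fact: 2} as a citation (a ``Fact'' ended with the box-tie symbol, not a theorem with proof): parts (i) and (iii) point to Marker's textbook, and part (ii) is the cell decomposition theorem from the o-minimality literature. Your sketch correctly reproduces the standard proofs underlying those citations---quantifier elimination for real closed fields via sign conditions, the simultaneous induction giving o-minimal cell decomposition together with continuity of a definable function on each cell, and quantifier elimination for algebraically closed fields via resultants---so the proposal and the paper's treatment agree in substance.
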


  Another advantage in dealing with definable sets is a built-in mechanism for induction, as the image of a definable set under projection to fewer coordinates remains definable. This is particularly powerful when we have good notions of dimension.
By Fact~\ref{fact: 2}(i) and Fact~\ref{fact: 2}(iii), definable sets in $\RR$ are essentially real manifolds, and definable sets in $\CC$ are essentially algebraic varieties. Hence, it is not too surprising that there are good notions of dimension $\dim_\RR$ and $\dim_\CC$ in these cases. Fact~\ref{fact: 3} is about the properties of $\dim_\RR$ and $\dim_\CC$. The properties of $\dim_\RR$ are generally consequences of the fact that $\dim_\RR$ agrees with the usual of notion of dimension of a real manifold ~\cite[page~5]{Lou}, and real closed fields are o-minimal~\cite[Corollary 2.3.6]{Lou}. The properties of $\dim_\CC$ are generally consequences of the fact that $\dim_\CC$ agrees with dimension in the sense of algebraic geometry~\cite[Corollary 6.2.23]{Marker}, and algebraically closed fields are strongly minimal~\cite[Corollary 3.2.9]{Marker}. 

\begin{fact} \label{fact: 3}

Suppose $K$ is either $\RR$ or $\CC$, and  $X$ and $Y$ are definable in $K$. Then we have the following:
\begin{enumerate}
    \item[\ir] $\dim_K (\emptyset) = -\infty $; $\dim_K (X) =0$  if and only if $X$ is finite; $\dim_\RR (I)  =1$ with $I=(a,b)$, $a,b \in \RR\cup\{-\infty, +\infty\}$ and $a<b$; $\dim_\CC (\CC) =1$. \cite[page~5]{Lou}\cite[Corollary 6.2.23]{Marker}
    \item[\iir] $\dim_K(X \cup Y) =\max\{ \dim_K (X), \dim_K (Y)\}$ \cite[Proposition 4.1.3]{Lou} \cite[Lemma 6.2.7]{Marker}.
    \item[\iiir] If $(X_b)_{b\in Y}$ is a definable family, and $X = \{(a,b) :a \in X_b\}$, then for each $d \leq \dim (X)$, we have $ Y_d = \{ b\in Y : \dim_K(X_b) =d \} $    is definable, 
    and 
    $$ \dim_K (X) = \max_{d \leq \dim (X)} (d + \dim_K (Y_d));       $$
    in particular, if $f: X \to Y$ is a definable bijection, then $\dim_K(X) = \dim_K (Y)$, and  $\dim_K (X \times Y ) = \dim_K (X) + \dim_K (Y)$ \cite[Corollary 4.1.6]{Lou} \cite[Lemma 6.2.20, Theorem 6.6.37]{Marker}.
    \item[\ivr]  If $K=\RR$ and $X \subseteq \RR^m$, then $\dim_\RR (X) = m$ if and only if $X$ contains an open subset \cite[Section 4.1.1]{Lou}. 
    \item[\vr] (algebraic boundedness) If $(X_b)_{b\in Y}$ is a definable family in $K$, then there is $N \in \NN^{\geq 1}$ such that either $|X_b|< N$ or $\dim_K(X_b) \geq 1$ \cite[Lemma 3.1.7, Corollary 4.1.6]{Lou} \cite[Lemma 6.1.14, Lemma 6.2.20]{Marker}. \qedhere
\end{enumerate}
\end{fact}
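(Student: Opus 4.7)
The approach is to take separate but parallel routes in the two cases: in the real case one leverages cell decomposition (Fact~\ref{fact: 2}(ii)), in the complex case one falls back to the classical dimension theory of constructible sets (Fact~\ref{fact: 2}(iii)). The first step is to actually \emph{define} the numbers $\dim_\RR$ and $\dim_\CC$, since Fact~\ref{fact: 3} treats them as already available. For $K=\RR$, I would set $\dim_\RR X$ equal to the maximum $k_i$ appearing in any cell decomposition $X=\sqcup_i X_i$ with $X_i$ definably homeomorphic to $(0,1)^{k_i}$, and check independence from the decomposition by passing to a common refinement of two such partitions. For $K=\CC$, I would set $\dim_\CC X$ equal to the Krull dimension of the Zariski closure of $X$, which by constructibility matches the usual algebro-geometric dimension.

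With these definitions in hand, (i) and (ii) become essentially tautological: a finite set is a disjoint union of $0$-cells; an open interval is a single $1$-cell; the affine line $\CC$ has Krull dimension $1$; and the max over a union of cell partitions is the max of the respective maxima. Item (iv) reduces to the observation that a top-dimensional cell in $\RR^m$ is a definable homeomorphic image of $(0,1)^m$ and is therefore open by the Invariance of Domain Theorem cited in Fact~\ref{fact: 2}(ii), while conversely any nonempty open subset of $\RR^m$ plainly contains an $m$-cell.

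The core content lies in (iii). Here the plan is to study the projection $\pi:\{(a,b):a\in X_b,\, b\in Y\}\to Y$. In the real case, I would apply cell decomposition \emph{compatibly with $\pi$}, producing a finite partition of the total space into cells on each of which $\pi$ has constant fiber dimension; grouping these cells by fiber dimension exhibits each $Y_d$ as a finite union of projected cells, hence definable, and a bookkeeping argument on the dimensions of base cell plus fiber cell yields the displayed formula. In the complex case, the analogous statement is Chevalley's fiber-dimension theorem together with upper semicontinuity of fiber dimension for morphisms of constructible sets. The consequences for definable bijections and for products then follow immediately.

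Item (v), algebraic boundedness, is the most delicate point and the main obstacle I foresee. The idea is to apply the same uniform-in-parameter cell decomposition (or its complex-algebraic analogue) to the family $X=\{(a,b):a\in X_b\}$ viewed as a definable set fibered over $Y$: this produces a partition of $Y$ into finitely many definable pieces on which $X_b$ has a fixed cell-type, so in particular a uniformly bounded number of $0$-cells whenever the fiber is finite, and otherwise contains a cell of positive dimension. This is the o-minimal or constructible-set avatar of B\'ezout's theorem. What does actual work is not the bookkeeping but the \emph{uniformity} of the cell decomposition in the parameter $b$, which is built into the standard development of o-minimality (cf.~\cite{Louominimal}) but is not visible from the bare statement of Fact~\ref{fact: 2}(ii); outside of this uniformity step, the proof of Fact~\ref{fact: 3} is essentially cell accounting plus one invocation of Chevalley.
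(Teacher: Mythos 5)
The paper does not prove Fact~\ref{fact: 3}; it states it as a standard fact of o-minimality (for $\RR$) and of dimension theory for constructible sets (for $\CC$), pointing the reader to \cite{Louominimal} and remarking only that (iv) follows from Fact~\ref{fact: 2}(ii) together with (i)--(iii), and that (v) is an abstract B\'ezout theorem. Your reconstruction follows exactly the standard development the paper implicitly has in mind: dimension via cell decomposition on the real side and via Krull dimension of the Zariski closure on the complex side, items (i), (ii), (iv) from the definition plus Invariance of Domain and Baire category, item (iii) from cell decomposition compatible with the projection (resp.\ Chevalley and fiber-dimension semicontinuity), and item (v) from the uniformity of cell decomposition in parameters.

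Your sketch is correct, and you have put your finger on the right pressure point: (v) is precisely where the bare statement of Fact~\ref{fact: 2}(ii) is not quite enough and one needs the parametrized (uniform) version of cell decomposition, which in the o-minimal literature is usually packaged with the cell decomposition theorem itself. Two small things you could tighten. First, the well-definedness of $\dim_\RR$ via ``common refinement'' deserves one sentence: under a further cell decomposition, a $k$-cell must contain another $k$-cell (else a nonempty open subset of $(0,1)^k$ would be covered by finitely many cells of dimension $<k$, contradicting Baire or Invariance of Domain), so the maximal $k_i$ is unchanged under refinement. Second, on the complex side you invoke ``the complex-algebraic analogue'' of uniform cell decomposition for (v) without naming it; the clean statement is that for a constructible family $(X_b)_{b\in Y}$ over $\CC$, the set of $b$ with $\dim X_b = d$ is constructible (Chevalley semicontinuity) and on the locus where $X_b$ is finite, $|X_b|$ is uniformly bounded by a Noetherianity/generic-flatness argument or by degree bounds (B\'ezout), which is what the paper's parenthetical ``abstract B\'ezout'' refers to. With those two clarifications your proof is a faithful filling-in of what the paper leaves to the references.
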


  An expansion $( \RR, \ldots)$ of $\RR$ is {\bf o-minimal} if every $X \subseteq \RR$ definable in  $(\RR, \ldots)$ is a finite union of points and open intervals. The analogues of Fact~\ref{fact: 2}(ii) and Fact~\ref{fact: 3} hold in all such  $( \RR, \ldots)$ \cite{Lou}. In fact, for an expansion of the field $\RR$, one can take Fact~\ref{fact: 2}(ii) as an alternative definition for o-minimality; indeed, homeomorphic images of $(0,1)^0$ are points and homeomorphic images of $(0,1)^1$ are open intervals. An important example of an o-minimal expansion of $\RR$ is $( \RR, \text{exp})$ where $\text{exp}$ is the exponential map; this allows us to include transcendental functions while keeping algebraic features (e.g., an analogue of Fact~\ref{fact: 3}(v)).

\section{Standard nonexpanding examples }



  Let $K$ be either $\RR$ or $\CC$.  We  will establish that polynomials of forms in (ii) and (iii) of Theorem~\ref{thm: new main} are symmetrically nonexpanding over $K$. We will also prove several other results along the same line.

\meno Let  $f$, $g$, $u$, and $v$ range over nonconstant univariate polynomials over $K$. 
When
$$ P_{\mathrm{a}}(x,y)=  f(u(x)+v(y))\quad \text{and} \quad P_{\mathrm{m}}(x,y)=  f(u(x)v(y)), $$
we say that  $ P_{\mathrm{a}}(x,y)$ is {\bf additive} over $K$, and  $P_{\mathrm{m}}(x,y)$ is {\bf multiplicative} over $K$.  For $\gamma, \delta \in K^{\neq 0}$,  $m, n \in \NN^{\geq 1}$, and 
$$ P_{\mathrm{sa}}(x,y) = f(\gamma u(x)+\delta u(y))  \quad \text{and} \quad   P_{\mathrm{sm}}(x,y)= f(u^m(x)u^n(y)),$$
we say that $ P_{\mathrm{sa}}(x,y)$ is {\bf symmetrically additive} over $K$, and  $ P_{\mathrm{sm}}(x,y)$ is {\bf symmetrically multiplicative} over $K$. For $\gamma_1, \gamma_2, \delta_1, \delta_2 \in K^{\neq 0}$, $m_1, m_2, n_1, n_2 \in \NN^{\geq 1}$, and 
$$ P_{\mathrm{a}}(x,y) = f(\gamma_1 u(x)+ \delta_1 v(y)), \quad  Q_{\mathrm{a}}(x,y) = g(\gamma_2 u(x)+ \delta_2 v(y)),   $$
$$  P_{\mathrm{m}}(x,y) = f(u^{m_1}(x)v^{n_1}(y))), \quad Q_{\mathrm{m}}(x,y) = g(u^{m_2}(x)v^{n_2}(y)),   $$
we say that $P_{\mathrm{a}}(x,y)$ and $Q_{\mathrm{a}}(x,y)$ form an {\bf additive pair} over $K$, and $ P_{\mathrm{m}}(x,y)$ and  $Q_{\mathrm{m}}(x,y)$ form a {\bf multiplicative pair} over $K$.
For $\gamma_1, \gamma_2, \delta_1, \delta_2 \in K^{\neq 0}$, $m_1, m_2, n_1, n_2 \in \NN^{\geq 1}$, and 
$$ P_{\mathrm{sa}}(x,y) = f(\gamma_1 u(x)+ \delta_1 u(y)), \quad  Q_{\mathrm{sa}}(x,y) = g(\gamma_2 u(x)+ \delta_2 u(y)),   $$
$$  P_{\mathrm{sm}}(x,y) = f(u^{m_1}(x)u^{n_1}(y))), \quad  Q_{\mathrm{sm}}(x,y) = g(u^{m_2}(x)u^{n_2}(y)),   $$
we say that $P_{\mathrm{sa}}(x,y)$ and $ Q_{\mathrm{sa}}(x,y)$ form a {\bf symmetric additive pair} over $K$, and $P_{\mathrm{sm}}(x,y)$ and  $Q_{\mathrm{sm}}(x,y)$ form a {\bf symmetric multiplicative pair} over $K$.

\meno In the following lemma,  if $L$ is a field extending $K$, we denote $\mathrm{trdeg}(L/K)$ the trancendental degree of $L$ over $K$.

\begin{lemma} \label{lem: estimation}
Let $\tau_1, \tau_2$ be in $\CC$. For positive $k, l$, we considered the generalized arithmetic progression
$$ G(k, l, \tau_1, \tau_2):= \left\{  \sum_{0 \leq i,j <l} a_{ij}  \tau_1^i \tau_2^j : a_{ij}\in \ZZ, 0 \leq a_{ij} < k \right\}.$$
Then we have the following:
\begin{itemize}
    \item[(i)] If $\mathrm{trdeg}(\QQ(\tau_1, \tau_2)/\QQ)=0$, then 
     there is $h= h(\tau_1, \tau_2) \in \NN^{\geq 1}$ such that for sufficiently large $l$, we have $|G(k,l, \tau_1, \tau_2)|\asymp_{l, \tau_1, \tau_2}k^h$.
    \item[(ii)] If $\mathrm{trdeg}(\QQ(\tau_1, \tau_2)/\QQ)=1$, 
    then there is $h= h(\tau_1, \tau_2) \in \NN^{\geq 1}$ such that for sufficiently large $l$, $|G(k,l, \tau_1, \tau_2)|\asymp_{l, \tau_1, \tau_2}k^{lh}$.
    \item[(iii)] If $\mathrm{trdeg}(\QQ(\tau_1, \tau_2)/\QQ)=2$, then 
     $|G(k,l, \tau_1, \tau_2)|= k^{l^2}$.
    \end{itemize}
\end{lemma}

\begin{proof}
We will only prove statement (i) as statement (ii) can be proven in a similar fashion, and statement (iii) is immediate. Let $d_1$ be the degree of the minimal polynomial of $\tau_1$ over $\QQ$ and $d_2$ be the degree of the minimal polynomial of $\tau_2$ over $\QQ(\tau_1)$. It suffices to show 
$$|G(k,l, \tau_1, \tau_2)|\asymp_{l, \tau_1, \tau_2}k^h \text{ with } h=d_1d_2 \text{ and } l\geq \max\{d_1-1, d_2-1\}.$$ Note that $(\tau_1^i\tau_2^j)_{0\leq i <d_1, 0\leq j<  d_2 }$ are linearly independent. Hence, 
$|G(k,l, \tau_1, \tau_2)|\geq k^h$. On the other hand, by using the minimal polynomial of $\tau_1$ over $\QQ$ and the minimal polynomial of $\tau_2$ over $\QQ(\tau_1)$, each element of $G(k,l, \tau_1, \tau_2)$ is equal to  a sum
$$ \sum_{0 \leq i< d_1, 0\leq j <d_2} \frac{b_{ij}}{N}  \tau_1^i \tau_2^j  $$
where $N=N(l, \tau_1, \tau_2)$ is a positive integer, and $b_{i,j}$  are integers with $|b_{i,j}| \leq Mk $ for some positive integers $M=M(l, \tau_1, \tau_2)$. Hence, $|G(k,l, \tau_1, \tau_2)|= O_{l, \tau_1, \tau_2}(k^h)$ and the desired conclusion follows.
\end{proof}

\begin{proposition} \label{prop: additivemultiplicativenonexpanding}
Suppose $K$ is either $\RR$ or $\CC$. Then we have the following:
\begin{enumerate}[{\rm (i)}]
    \item If $P(x,y)$ is either additive or multiplicative, then for each $\varepsilon \in \RR^{>0}$, there are  $A, B \subseteq K$ with arbitrarily large $n=|A|=|B|$ such that 
    $$  |P(A,B)|<  n^{1+\varepsilon}.$$
    In other words, $P(x,y)$ is nonexpanding.
    \item If $P(x,y)$ is either symmetrically additive or symmetrically multiplicative, then for each $\varepsilon \in\RR^{>0}$, there is $A\subseteq K$ with arbitrarily large $n=|A|$ such that 
    $$  |P(A,A)|< n^{1+\varepsilon}.$$
    In other words, $P(x,y)$ is symmetrically nonexpanding.
    \item If $P(x,y)$ and $Q(x,y)$ forms an additive pair or a multiplicative pair,  then for each $\varepsilon \in \RR^{>0}$, there are  $A, B \subseteq K$ with arbitrarily large $n=|A|=|B|$ such that 
    $$ \max\{ |P(A,B)|, |Q(A,B)| \}< n^{1+\varepsilon}.$$
    \item If $P(x,y)$ and $Q(x,y)$ forms a symmetric additive pair or a symmetric multiplicative pair, then for each $\varepsilon \in\RR^{>0}$, there is $A\subseteq K$ with arbitrarily large $n=|A|$ such that 
    $$ \max\{ |P(A,A)|, |Q(A,A)| \}< n^{1+\varepsilon}.$$
\end{enumerate}
\end{proposition}

\begin{proof}
We note that (i) follows from (iii), and (ii) follows from (iv), so we only need to prove (iii) and (iv). We will only discuss the situation where $P(x,y)$ and $Q(x,y)$ form an additive pairs in (iii) as the remaining parts of (iii) and (iv) can be handled in a similar fashion. Consider first the special case where 
$$P(x,y)=x+\tau_1y \text{ and } Q(x,y)=x+\tau_2y$$
with  $\tau_1, \tau_2 \in \RR^{\neq 0}$. Define $G(k,l, \tau_1, \tau_2)$ as in Lemma~\ref{lem: estimation}.  For $i\in \{1, 2\}$, we have
$$G(k,l, \tau_1, \tau_2)+ \tau_i G(k,l, \tau_1, \tau_2) \subseteq  G(2k,l+1, \tau_1, \tau_2).$$ 
By Lemma~\ref{lem: estimation}, we can choose  $l =l(\tau_1, \tau_2)$ such that for sufficiently large $k$, we have
$ |G(2k,l+1, \tau_1, \tau_2)| < |G(k,l, \tau_1, \tau_2)|^{1+\varepsilon}.$ Hence, choosing $A = B= G(k,l, \tau_1, \tau_2)$ with such $k$ and $l$, we see that $|A+\tau_1B|$ and $|A+\tau_2B|$ has size bounded above by $n^{1+\varepsilon}$ with $n=|A|=|B|$. 

In the general case, 
$P(x,y)=f(\gamma_1u(x)+\delta_1v(y)) \text{ and } Q(x,y)=g(\gamma_2u(x)+\delta_2v(y)).$
Setting $\tau_1=\delta_1/\gamma_1 $ and $\tau_2=\delta_2/\gamma_2$ and replacing $f(t)$ with $f(\gamma_1t)$ and $g(t)$ with $g(\gamma_2t)$ if necessary, we can reduce the problem to the case where
$$P(x,y)=f(u(x)+\tau_1v(y)) \text{ and } Q(x,y)=g(u(x)+\tau_2v(y)).$$
Let $G(k,l, \tau_1, \tau_2)$ be as before, and set $A = u^{-1} G(k, l, \tau_1, \tau_2)$ and $B = v^{-1} G(k, l, \tau_1, \tau_2)$. Argue as in the earlier special case, we arrive at the desired conclusion.
\end{proof}

\section{Proof of Theorem~\ref{thm:main}}\label{sec:real}

  In this section, we will prove Theorem~\ref{thm:main}. For convenience, we will use the terms $\RR$-definable and $\CC$-definable as short-hands for definable in $\RR$ and definable in $\CC$ respectively.

\meno Suppose $r$ is in $(1,2]_\RR$ and $\alpha$ is in $\RR^{>0}$. We say that $P(x, y)$ is {\bf $(r, \alpha)$-expanding} over $K$ if for all $n\in\NN^{>0}$ and all finite $A, B \subseteq K$ with $|A| =|B| =n $, we have
 $$ |P(A,B)|  \geq  \alpha n^{r}. $$
 If for all finite $A\subseteq K$ with $|A|=n $ we have $|P(A,A)|  \geq  \alpha n^{r}$, we say that $P(x, y)$  a {\bf symmetrically $(r, \alpha)$-expanding} over $K$. In particular, a nonexpanding polynomial is not $(r, \alpha)$-expanding, and a symmetric nonexpanding polynomial is not $(r, \alpha)$-expanding.
 We say that $P(x,y)$ and $Q(x, y)$ form a {\bf $(r, \alpha)$-expanding pair} over $K$ if  for all $n\in\NN^{>0}$ and all finite  $A, B\subseteq K$ with $|A|=|B|=n $, we have
 $$\max\{|P(A,B)|, |Q(A,B)|\} \geq  \alpha n^{r}.$$ Finally, if $\max\{|P(A,A)|, |Q(A,A)|\} \geq  \alpha n^{r}$ for all finite $A\subseteq K$ with $|A|=n $, we say that $P(x,y)$ and $Q(x, y)$ form a {\bf symmetric $(r, \alpha)$-expanding pair} over $K$. Clearly, if $(P(x,y),P(x,y))$ is an  $(r, \alpha)$-expanding pair over $K$, then  $P(x,y)$ is $(r, \alpha)$-expanding over $K$.

\meno We will reduce Theorem~\ref{thm:main} to a special case using Fact~\ref{fact:structure}, which consists of Elekes--R\'onyai type structural results for nonexpanding polynomials. The case where $K = \RR$ of Fact~\ref{fact:structure} is a  recent Theorem  by Raz, Sharir, and Solymosi~\cite{RSS16}; this refines Elekes--R\'onyai's original result in \cite{Elekes00}. The case where $K=\CC$ is folklore as communicated privately to us by de Zeeuw. This follows from the arguments used in the proof in \cite{RSS16} together with  Solymosi--de Zeeuw's incidence bound in~\cite{SolymosideZeeuw}. We remark that a Szemer\'edi--Trotter-type lemma proved in \cite{RSS16} used Sz\'ekely's crossing lemma strategy~\cite{Szekely}, this can be replaced by the partitioning method used in the proofs of complex Szemer\'edi--Trotter-type results in the complex field case, see~\cite{ST12, Zahl}.

\begin{fact}\label{fact:structure}
Suppose $K$ is either $\RR$ or $\CC$. Then there is $\alpha = \alpha(\deg P)$ such that if $P(x,y)\in K[x,y]$ is not $(4/3, \alpha)$-expanding over $K$, then $P(x,y)$ is either additive or multiplicative.
\end{fact}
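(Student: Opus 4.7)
\meno The plan is to establish the contrapositive by an incidence-geometric argument in the spirit of Elekes--R\'onyai~\cite{Elekes00}, refined quantitatively to exponent $4/3$ following Raz--Sharir--Solymosi~\cite{RSS16}. Suppose for contradiction that $|P(A,B)| \leq \alpha n^{4/3}$ for some finite sets $A, B \subseteq K$ with $|A|=|B|=n$ and some small constant $\alpha$ to be chosen in terms of $\deg P$; I aim to conclude that $P$ must be additive or multiplicative over $K$.

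\meno The first step is to transfer the small-image hypothesis into a large-energy statement. Define
$$E := |\{(a,a',b,b') \in A^2 \times B^2 : P(a,b)=P(a',b')\}|.$$
Cauchy--Schwarz applied to the fibers of the map $(a,b) \mapsto P(a,b)$ yields $E \geq n^4 / |P(A,B)| \geq n^{8/3}/\alpha$. The second step is to recast $E$ as an incidence count. For each $(b,b') \in B \times B$, consider the algebraic curve
$$C_{b,b'} := \{(x,x') \in K^2 : P(x,b)=P(x',b')\},$$
whose degree is bounded in terms of $\deg P$. Then $E$ equals exactly the number of incidences between the $n^2$ points of $A \times A$ and the $n^2$ curves $\{C_{b,b'}\}_{(b,b') \in B^2}$.

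\meno The third step is to apply a Szemer\'edi--Trotter-type bound for algebraic curves of bounded degree: over $\RR$, the Pach--Sharir bound in its sharpened form from~\cite{Fox17}; over $\CC$, the bound of Solymosi--de Zeeuw~\cite{SolymosideZeeuw}. Provided the family $\{C_{b,b'}\}$ satisfies the standard pseudoline-like hypothesis --- any two distinct curves meet in a bounded number of points, and only boundedly many curves pass through any given pair of points --- one obtains $E \leq C(\deg P) \cdot n^{8/3}$. Combined with the lower bound on $E$, this pins $\alpha$ below an absolute constant depending only on $\deg P$, a contradiction once $\alpha$ is chosen smaller than that threshold.

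\meno The hard part, and the source of the structural dichotomy, is analyzing when the pseudoline-like hypothesis fails. Two failure modes are possible: either many of the curves $C_{b,b'}$ share common irreducible components as $(b,b')$ varies, or some pair of distinct curves meets in a positive-dimensional subset of $K^2$. The algebraic boundedness in Fact~\ref{fact: 3}(v) rules out the second mode being compatible with the curves remaining distinct, so any failure forces a genuinely algebraic coincidence. In either case, the polynomial $P(x,y)-P(x',y')$ must factor in $K[x,x',y,y']$ in a rigidly structured way --- its generic specializations along the $(y,y')$-plane become reducible or non-reduced. A classical resultant argument (due to Elekes--R\'onyai, refined in~\cite{RSS16}) then extracts from this rigidity a decomposition $P(x,y) = f(u(x)+v(y))$ or $P(x,y) = f(u(x)v(y))$. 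Making this last extraction rigorous --- precisely linking the failure of the incidence hypothesis to an additive or multiplicative form --- is where the bulk of the work lies and is the main technical obstacle; the complex case additionally demands adapting several intermediate steps of~\cite{RSS16} from real algebraic geometry to its complex counterpart, which is the point at which the folklore argument communicated by de Zeeuw becomes essential.
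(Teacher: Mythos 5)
The paper does not prove this statement; it is recorded as a \emph{Fact}, with the $K=\RR$ case attributed to Raz--Sharir--Solymosi \cite{RSS16} and the $K=\CC$ case described as folklore (communicated by de Zeeuw, deducible from \cite{RSS16} plus the Solymosi--de~Zeeuw incidence bound \cite{SolymosideZeeuw} after modifying some steps to work over $\CC$). So there is no internal proof in the paper to compare against; what you have written is a sketch of how the cited references establish the result.

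As such a sketch, your outline is consistent with the known proof strategy: the Cauchy--Schwarz energy bound, the reformulation of the energy $E$ as incidences between $A\times A$ and the curve family $C_{b,b'}=\{(x,x'):P(x,b)=P(x',b')\}$, a semialgebraic Zarankiewicz/Szemer\'edi--Trotter bound giving $E=O(n^{8/3})$ when the family is nondegenerate, and hence $|P(A,B)|\gtrsim n^{4/3}$. You are also right that the substantive work is in the structural analysis of when the incidence hypothesis fails and in showing that this failure forces $P$ to be additive or multiplicative --- this is precisely the Elekes--R\'onyai resultant analysis, refined quantitatively in \cite{RSS16} via the $K^4$-surface $\{P(x,y)=P(x',y')\}$. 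Two remarks of caution. First, your description of the two ``failure modes'' is a bit blurred: over an algebraically closed field, two distinct curves meeting in a positive-dimensional set is the \emph{same} phenomenon as sharing an irreducible component, so these are not independent modes; the algebraic boundedness in Fact~\ref{fact: 3}(v) is what lets one pass from ``many intersection points'' to ``shared component,'' which is the content you want but not quite how you phrased it. Second, \cite{RSS16} does not run the classical Elekes--R\'onyai case analysis of incidence failure; it instead directly parametrizes the families of curves inside the variety $\{P(x,y)=P(x',y')\}\subset K^4$ and shows the degenerate families force the additive/multiplicative form, with the incidence bound following once nondegeneracy is established. Your sketch reads closer to the original Elekes--R\'onyai route, which is fine in spirit but a different organization from the reference the paper actually invokes. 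Since you explicitly flag that you are not supplying the structural extraction, these are framing issues rather than gaps, and the sketch is a fair account of the cited proof.
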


Using Fact~\ref{fact:structure}, the following corollary is immediate.

\begin{corollary} \label{cor:structure}
If Theorem~\ref{thm:main} holds in the special case where $P(x,y)$ and $Q(x,y)$ are individually either additive or multiplicative, then Theorem~\ref{thm:main} holds in general.
\end{corollary}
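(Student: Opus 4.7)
The plan is a short case split driven by Fact~\ref{fact:structure}. Given $P(x,y)$ and $Q(x,y)$ with nontrivial dependence on both variables, I would distinguish two situations: either each of $P$ and $Q$ is additive or multiplicative over $K$ (Case A), or at least one of them, say $P$, is neither (Case B, with $Q$ handled symmetrically). In Case A the hypothesis of the corollary applies verbatim, yielding a constant $\alpha_{\mathrm{sp}} = \alpha_{\mathrm{sp}}(\deg P, \deg Q)$ and exactly one of the three conclusions (i)--(iii) of Theorem~\ref{thm:main}, so nothing further is needed beyond carrying the constant through.

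Case B is the only step with any content. Applying Fact~\ref{fact:structure} to $P$ produces $\alpha_P = \alpha_P(\deg P) > 0$ such that $|P(A,B)| > \alpha_P n^{4/3}$ for all $n$-element subsets $A, B \subseteq K$, and the goal is to convert this into conclusion (i) with the weaker exponent $5/4$. Since $4/3 > 5/4$, whenever $n \geq (\alpha/\alpha_P)^{12}$ we automatically have $\alpha_P n^{4/3} \geq \alpha n^{5/4}$, so the required $5/4$-bound holds for all sufficiently large $n$. On the remaining finite range of small $n$ I would invoke the trivial lower bound $|P(A,B)| \geq 1$ and shrink $\alpha$ far enough (concretely $\alpha \leq \alpha_P^{15/16}$, which makes $\alpha n^{5/4} < 1$ throughout that range) to absorb this regime. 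Setting
\[
\alpha(\deg P, \deg Q) := \min\bigl\{\alpha_{\mathrm{sp}},\, \alpha_P^{15/16},\, \alpha_Q^{15/16}\bigr\}
\]
then delivers a single constant (depending only on the degrees) that works uniformly in $n$ and in both cases.

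To finish, I would verify that the three conclusions remain mutually exclusive. In Case A this is inherited directly from the special case. In Case B the hypothesis that $P$ is neither additive nor multiplicative rules out (ii) and (iii), since both would force $P$ into one of these two structural forms, leaving only (i), which the previous paragraph established. The only real obstacle, and it is a mild one, is the small-$n$ bookkeeping that fixes the explicit constraint on $\alpha$; the substantive structural work has already been done in Fact~\ref{fact:structure} and in the assumed special case, so the corollary reduces to a careful comparison between the exponents $4/3$ and $5/4$.
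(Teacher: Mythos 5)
Your proof is correct and does exactly what the paper leaves implicit: Corollary~\ref{cor:structure} is stated immediately after Fact~\ref{fact:structure} precisely because the reduction is a one-line contrapositive, and your case split is the intended one. In Case B, Fact~\ref{fact:structure} applied to $P$ gives $|P(A,B)| > \alpha_P n^{4/3}$, so conclusion~(i) follows, and conclusions~(ii) and~(iii) are ruled out because each forces $P$ to be additive or multiplicative (in~(ii), $\gamma_1 u(x)$ and $\delta_1 v(y)$ are themselves univariate polynomials).

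The one place you over-engineered is the small-$n$ bookkeeping. Since $n^{4/3} \geq n^{5/4}$ for every $n \geq 1$, setting $\alpha := \min\{\alpha_{\mathrm{sp}},\,\alpha_P,\,\alpha_Q\}$ already gives $|P(A,B)| > \alpha_P n^{4/3} \geq \alpha n^{4/3} \geq \alpha n^{5/4}$ uniformly in $n$; the threshold $(\alpha/\alpha_P)^{12}$ is then at most $1$ and the ``remaining finite range'' is empty, so the trivial bound $|P(A,B)| \geq 1$ and the exponent $15/16$ never come into play. Your version is not wrong -- the algebra checks out -- but it makes a trivial comparison look like it needs care, which is worth avoiding since the whole point of the corollary is that the hard work lives in Fact~\ref{fact:structure} and the special case, not here.
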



\meno For $(b_1, b_2) \in K^2$, denote by $C_{b_1, b_2}$ the curve
$$
\big\{\big(P(a,b_1),Q(a, b_2)\big): a\in K\big\}.
$$
Note that $\dim_K(C_{b_1, b_2})\leq 1$ for each $(b_1, b_2) \in K^2$ by Fact~\ref{fact: 3}(iii).
For a  $K$-definable $ S \subseteq K$,  set 
$\C_{P,Q}(S)$ to be the definable family $(C_{b_1,b_2})_{(b_1, b_2) \in S^2}$.
We say that $\C_{P,Q}(S)$ is {\bf scattered} if for all $(b_1,b_2) \in S^2$, 
$$ \{ (b'_1, b'_2) \in S^2 :   \dim_{K}(C_{b_1,b_2} \cap C_{b'_1,b'_2}) = 1 \} \, \text{ is finite} .$$
 We will see that the feature that distinguishes case (i) from cases (ii) and (iii) of Theorem~\ref{thm:main} when $P(x,y)$ and $Q(x,y)$ are each either additive or multiplicative is essentially the existence of a cofinite subset $S$ of $K$ such that  $\C_{P,Q}(S)$ is scattered.

\meno We need a suitable version of  Szemer\'edi--Trotter theorem. The case where $K= \RR$ of Fact~\ref{fact:st} is a very special case of the main result in~\cite{Fox17}. The case where $K = \CC$ is essentially known in the field as privately communicated to us by de Zeeuw. This can be deduced from a result in~\cite{SolymosideZeeuw}. To be more precise, Fact~\ref{fact:st} with the $K_{k,k}$-free condition replaced by a $K_{2,k}$-free condition can be derived quite immediately. To get the actual version of Fact~\ref{fact:st}, we can employ the trick in the proof of Corollary 4.2 of \cite{MPVZ} by Mojarrad, Pham, Valculescu, and de Zeeuw. 
\begin{fact}\label{fact:st}
Assume  $K$ is either $\RR$ or $\CC$, $\phi(x_1, x_2, y_1, y_2)$ is a formula in the language of fields (possibly with parameters from $K$), and  $G = ( X, Y, E)$ is a bipartite graph with the following properties:
\begin{enumerate}
    \item[(i)] $X = X_1 \times X_2$ where $X_1$ and $X_2$ are finite subsets of $K$;
    \item[(ii)] $Y$ is a finite subset of $K^2$;
    \item[(iii)] for $c \in X $ and $d \in Y$, $(c,d)$ is in $E$ if and only if $\phi(c, d)$ holds.
\end{enumerate}{}
If $G$ is $K_{k,k}$-free, then
\[
|E|\leq \beta\big((|X||Y|)^{2/3}+|X|+|Y|\big),
\]
where $\beta$ depends only on $k$ and the complexity of $\phi$ as described in \cite{Fox17}. More precisely, as $\CC$ and $(\RR, <)$ admit quantifier elimination (i.e, Fact~\ref{fact: 2} (i) and (iii)), the formula $\phi$ is equivalent to either a boolean combination of polynomial equations or a boolean combination of polynomial inequalities. The complexity of $\phi$ consists of the length of the Boolean combination and the maximum degree of the polynomials involved.
\end{fact}
  The  generalization of Fact~\ref{fact:st} where $X$ is just assumed to be finite subset of $K^2$ but not necessarily a Cartesian product as in (i) also recently became available. For $\RR$, this is still a special case of the main theorem of~\cite{Fox17}. The result for $\CC$ can be obtained from a recent paper of Walsh~\cite{Miguel} by using the same trick from~\cite{MPVZ} by Mojarrad, Pham, Valculescu, and de Zeeuw  as discussed above.

\begin{corollary} \label{cor:st}  Assume $K$ is either $\RR$ or $\CC$, $N$ is in $\NN^{>0}$, and there is a cofinite $S \subseteq K$ such that $|K\setminus S| \leq N$ and $\C_{P,Q}(S)$ is scattered. 
Then $P(x,y)$ and $Q(x,y)$  form a $(5/4, \alpha)$-expanding pair with $\alpha=  \alpha( \deg P, \deg Q, N)$.
\end{corollary}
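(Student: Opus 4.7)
The plan is to carry out an Elekes-style incidence argument, feeding the data $(A, B, P, Q)$ into Fact~\ref{fact:st}. Fix $A, B \subseteq K$ with $|A| = |B| = n$; after replacing $B$ by $B \cap S$ we may assume $B \subseteq S$, at the cost of discarding at most $N$ elements (absorbed into $\alpha$). I take the point set to be the Cartesian product $X := P(A,B) \times Q(A,B) \subseteq K^2$ and the parameter set to be $Y := B \times B$, equipped with the incidence relation defined by
\[
\phi(x_1, x_2, y_1, y_2) \;:=\; \exists a\,\bigl(P(a, y_1) = x_1 \wedge Q(a, y_2) = x_2\bigr),
\]
whose complexity is bounded in terms of $\deg P$ and $\deg Q$ alone.

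For the lower bound on the incidence count $|E|$, observe that for $(b_1, b_2) \in Y$ such that $P(x, b_1)$ and $Q(x, b_2)$ are both nonconstant in $x$---which excludes only $O(n)$ pairs, since $P$ and $Q$ depend nontrivially on the first variable---the polynomial map $a \mapsto (P(a, b_1), Q(a, b_2))$ from $K$ to $K^2$ is finite-to-one with fibres of size at most $\deg_x P$, so it contributes at least $n/\deg_x P$ distinct incidences from $A$ with $C_{b_1, b_2}$. Summing over the $\Omega(n^2)$ good parameters yields $|E| \gtrsim n^3$.

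For the $K_{k,k}$-free hypothesis, I combine scatteredness with algebraic boundedness (Fact~\ref{fact: 3}(v)). Each $C_y$ is the image of the irreducible variety $K$ under a polynomial map, so its Zariski closure is irreducible; hence the relation $y \sim y'$ defined by $\dim_K(C_y \cap C_{y'}) = 1$ amounts to $C_y$ and $C_{y'}$ having the same Zariski closure, and is in particular an equivalence relation. Scatteredness makes each $\sim$-class finite, and algebraic boundedness applied to the definable family $\bigl(\{y' \in S^2 : y' \sim y\}\bigr)_{y \in S^2}$ upgrades this to a uniform bound $M = M(\deg P, \deg Q, N)$ on the class sizes. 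Choosing a transversal $Y_0 \subseteq Y$ (biased towards the ``good'' parameters above) gives $|Y_0| \gtrsim n^2$. For distinct $y, y' \in Y_0$, $\dim_K(C_y \cap C_{y'}) = 0$, and applying algebraic boundedness once more to the family of such intersections yields a uniform bound $N''$ on $|C_y \cap C_{y'}|$. Consequently the bipartite graph $(X, Y_0, E)$ is $K_{k,k}$-free for $k := N'' + 1$.

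Plugging this into Fact~\ref{fact:st} gives
\[
n^3 \;\lesssim\; |E| \;\lesssim\; (|X|\cdot|Y_0|)^{2/3} + |X| + |Y_0| \;\lesssim\; (|X|\, n^2)^{2/3} + |X| + n^2,
\]
which forces $|X| = |P(A,B)|\cdot|Q(A,B)| \gtrsim n^{5/2}$ once $n$ exceeds a threshold depending on $\deg P, \deg Q, N$ (smaller values of $n$ are handled by shrinking $\alpha$). This yields $\max\{|P(A,B)|, |Q(A,B)|\} \gtrsim n^{5/4}$, as required. The main obstacle I anticipate is the bookkeeping around scatteredness: the definition only provides fibrewise finiteness of the $\sim$-classes, and pairing it with algebraic boundedness is essential both to pass to a sufficiently large transversal $Y_0$ and to obtain the uniform bound $N''$ on pairwise intersections required for the $K_{k,k}$-free condition.
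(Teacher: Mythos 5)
Your proposal is correct and shares the overall architecture with the paper's proof: the same point set $X = P(A, B\cap S) \times Q(A, B\cap S)$, the same parameter set $Y = (B\cap S)^2$, the same incidence relation $\phi$, the same lower bound $|E| \gtrsim n^3$ (each good parameter contributes $\gtrsim n$ incidences), and the same application of Fact~\ref{fact:st} to extract $|P(A,B)|\cdot|Q(A,B)| \gtrsim n^{5/2}$. Where you differ is in establishing the $K_{k,k}$-free condition. You observe that $y \sim y'$ iff $\dim_K(C_y\cap C_{y'})=1$ is an equivalence relation (via irreducibility of the Zariski closures of the parametrized curves), apply Fact~\ref{fact: 3}(v) to bound class sizes uniformly, pass to a transversal $Y_0$, and then apply Fact~\ref{fact: 3}(v) again to bound $|C_y\cap C_{y'}|$ for distinct $y,y'\in Y_0$. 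The paper does not pass to a transversal at all, nor does it need the irreducibility observation. It applies Fact~\ref{fact: 3}(v) once to the family $(C_{b_1,b_2}\cap C_{b'_1,b'_2})$ and once to the family $(Y_{b_1,b_2})$ to get a single $k$ such that (a) $|C_y \cap C_{y'}| \geq k$ forces the intersection to be one-dimensional, and (b) $|Y_y| \geq k$ forces $Y_y$ to be infinite. Scatteredness gives $|Y_y| < k$ for every $y\in S^2$, and then a $K_{k,k}$ in $(X,Y,E)$ would force some $|Y_{y_1}| \geq k$ directly, a contradiction. The paper's route is thus purely definability-theoretic and works on all of $Y$ at once, whereas yours buys nothing extra at the cost of the transversal bookkeeping and a genuine algebraic-geometry input (that the real or complex Zariski closure of a nonconstant polynomial image of $K$ is an irreducible curve, so two such curves with one-dimensional common part coincide). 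Both are sound; the paper's is leaner.
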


\begin{proof}
Recall that  $\C_{P,Q}(S)$ is the $K$-definable family $(C_{b_1,b_2})_{(b_1, b_2) \in S^2}$. 
The family $$(C_{b_1,b_2} \cap  C_{b'_1,b'_2})_{(b_1,b_2, b'_1, b'_2) \in S^4}$$ is also $K$-definable.
For $(b_1,b_2) \in S^2$, set 
$$
Y_{b_1,b_2} = \{ (b'_1, b'_2) \in S^2 :   \dim_K (C_{b_1,b_2} \cap  C_{b'_1, b'_2}) = 1 \}.
$$  
Then the family $(Y_{b_1,b_2})_{ (b_1,b_2) \in S^2}$ is $K$-definable by Fact~\ref{fact: 3}(iii). Using  Fact~\ref{fact: 3}(v), we obtain $k=\max\{N(C_{b_1,b_2} \cap  C_{b'_1,b'_2}),N(Y_{b_1,b_2})\}>0$, where $N(C_{b_1,b_2} \cap  C_{b'_1,b'_2})$ and $N(Y_{b_1,b_2})$ are constants obtained from Fact~\ref{fact: 3}(v), such that for all $(b_1,b_2) \in S^2$, the following holds:
\begin{enumerate}
    \item For all $(b'_1,b'_2) \in K^2$, if $|C_{b_1,b_2} \cap C_{b'_1, b'_2}| \geq k$, then $\dim_K(C_{b_1,b_2} \cap C_{b'_1,b'_2}) = 1$.
    \item If $|Y_{b_1, b_2}| \geq  k$, then $\dim_K (Y_{b_1, b_2}) \geq  1.$
\end{enumerate}
Let  $\phi(x_1, x_2, y_1, y_2)$ be the formula in the language of fields such that $\phi(c_1,c_2, d_1, d_2)$ holds for $(c_1, c_2) \in K^2$ and $(d_1, d_2) \in S^2$ if and only if 
$$ (c_1,c_2)\in C_{d_1, d_2}. $$
For subsets $A$ and $B$ of $K$ with $|A| =|B| =n$, define $G=(X,Y, E)$ as in Fact~\ref{fact:st} with $X_1 = P(A,B\cap S)$, $X_2 = Q(A, B\cap S)$, $Y=  (B\cap S) \times (B\cap S)$, and $\phi(x_1,x_2, y_1, y_2)$ as above.
By the assumption that $\C_{P,Q}(S)$ is scattered and our choice of $k$, the graph  $G$ is $K_{k,k}$-free. Applying Fact~\ref{fact:st}, and noting that $|Y|=|(B\cap S) \times (B\cap S)|\leq n^2$, we get a constant $\beta$ depending only on $\phi$ and $k$ such that
\[
|E| \leq \beta\big(|P(A,B\cap S)\times Q(A,B\cap S)|^{\frac{2}{3}}(n^2)^{\frac{2}{3}} + |P(A,B\cap S)\times Q(A,B\cap S)|+n^2\big).
\]
Let $d$ be be maximum degree of $P(x,y)$ and $Q(x,y)$.
Note that the curve $C_{b_1, b_2}$ passes through at least $n\slash d$ points in $X$ for each $(b_1, b_2) \in Y$, namely, the points of the form $(P(a,b_1), Q(a,b_2))$ for $a \in A$. In particular, $|E| \geq n(n-N)^2\slash 2d$ when $n$ is larger than $N$.
Hence, we get $\max\{|P(A,B)|,|Q(A,B)|\}\geq \alpha n^{5/4}$ for a constant $\alpha$ depending only on $k$, the complexity of $\phi$, and $N$ which in turn depends only on $\deg P$, $\deg Q$, and $N$.
\end{proof}

  Recall that, by Corollary~\ref{cor:structure}, Theorem~\ref{thm:main}  reduces to the special case where $P(x,y)$ and $Q(x,y)$ are each either additive or multiplicative. We will make use of the following easy lemma which is a restatement of~\cite[Lemma 9]{BasuRaz} using Fact~\ref{fact: 3}. This can be viewed as either a ``definable Ramsey'' theorem.

\begin{fact} \label{fact:101}
Suppose $X \subseteq \RR^m$ is  $\RR$-definable, and $(X_b)_{b\in Y}$ is an $\RR$-definable  family of subsets of $X$ with  $\dim_\RR(X_b)= \dim_\RR (X)$ for every $b\in Y$. Then there are $\RR$-definable $X
' \subseteq X$ and $\RR$-definable $Y'\subseteq Y$ such that  $\dim_\RR (X') =  \dim_\RR (X)$, $\dim_\RR (Y') = \dim_\RR (Y)$,  and  $X'$  is a subset of $X_b$ for all $b \in  Y'$.
\end{fact}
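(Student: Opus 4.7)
Set $d = \dim_\RR X$ and $e = \dim_\RR Y$, and consider the $\RR$-definable set $Z = \{(a,b) \in X \times Y : a \in X_b\}$. My plan is to reduce to a situation where $X$ and $Y$ are, up to definable homeomorphism, open subsets of $\RR^d$ and $\RR^e$ respectively, and then invoke Fact~\ref{fact: 3}(iv): a definable subset of $\RR^{d+e}$ of dimension $d+e$ must contain a nonempty open set, and hence an open product box $U' \times V'$.

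First I will carry out a finite Ramsey-type reduction using cell decomposition. By Fact~\ref{fact: 2}(ii), partition $X = X_1 \sqcup \cdots \sqcup X_s$ into cells. For each $i$, the set $\{b \in Y : \dim_\RR(X_b \cap X_i) = d\}$ is $\RR$-definable by Fact~\ref{fact: 3}(iii) applied to the definable family $(X_b \cap X_i)_{b \in Y}$. Since $\dim_\RR X_b = d$ for every $b \in Y$, these sets cover $Y$ as $i$ ranges over indices with $\dim_\RR X_i = d$, so taking finite boolean combinations yields a definable partition of $Y$ into finitely many pieces. By Fact~\ref{fact: 3}(ii), some piece $Y^{(1)}$ has dimension $e$, and on it a fixed index $i_0$ satisfies $\dim_\RR(X_b \cap X_{i_0}) = d$ for all $b \in Y^{(1)}$. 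Applying cell decomposition to $Y^{(1)}$ and selecting a top-dimensional subcell $Y^{(2)} \subseteq Y^{(1)}$, I replace $X$ by $X_{i_0}$, $Y$ by $Y^{(2)}$, and each fiber by $X_b \cap X_{i_0}$. Since any $\RR$-definable $X' \subseteq X_{i_0}$ and $Y' \subseteq Y^{(2)}$ witnessing Fact~\ref{fact:101} for the reduced family also witness it for the original one, from now on I may assume $X$ is a single $d$-cell in $\RR^m$, $Y$ is a single $e$-cell in $\RR^n$, and $\dim_\RR X_b = d$ for every $b \in Y$.

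Next I will flatten the geometry. By Fact~\ref{fact: 2}(ii), there exist $\RR$-definable homeomorphisms $\psi \colon X \to U$ and $\varphi \colon Y \to V$ where $U \subseteq \RR^d$ and $V \subseteq \RR^e$ are open. The image $Z^* = (\psi \times \varphi)(Z)$ is an $\RR$-definable subset of $U \times V \subseteq \RR^{d+e}$. By Fact~\ref{fact: 3}(iii) applied to the family $(X_b)_{b \in Y}$, one has $\dim_\RR Z = d + e$, and since $\psi \times \varphi$ is a definable bijection the same fact gives $\dim_\RR Z^* = d + e$. Fact~\ref{fact: 3}(iv) now forces $Z^*$ to contain a nonempty open subset of $\RR^{d+e}$, which in turn contains an open box $U' \times V'$ with $U' \subseteq U$ and $V' \subseteq V$ open. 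Setting $X' = \psi^{-1}(U')$ and $Y' = \varphi^{-1}(V')$ produces $\RR$-definable subsets of $X$ and $Y$ with $\dim_\RR X' = d$ and $\dim_\RR Y' = e$, and the inclusion $U' \times \{\varphi(b)\} \subseteq Z^*$ for each $b \in Y'$ unwinds to $X' \subseteq X_b$.

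The main obstacle I foresee is not conceptual but the bookkeeping in the reduction paragraph: one must repeatedly appeal to Fact~\ref{fact: 3}(iii) to verify definability of the partitioning sets, and one must check that after each reduction step the fiber-dimension hypothesis $\dim_\RR X_b = \dim_\RR X$ is preserved with $X$ being updated along the way. The geometric content of the proof is completely transparent once the reduction is in place: a top-dimensional definable set inside a product of open Euclidean neighborhoods must be open-dense somewhere, hence contain a coordinate product box, and that box is precisely the definable product substructure demanded by Fact~\ref{fact:101}.
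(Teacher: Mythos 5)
Your proof is correct, and it takes a cleaner route than the paper's own argument (which is presented in the source but ultimately replaced by a citation to Basu--Raz). Your initial reduction — cell decomposition of $X$, choice of a top-dimensional cell $X_{i_0}$ and a full-dimensional $Y^{(1)}$ on which $\dim(X_b\cap X_{i_0})=d$, then cell decomposition of $Y$ — matches the paper's reduction step exactly. Where the two proofs diverge is after the reduction: the paper performs a further reduction so that each fiber $X_b$ is an open box $(c_b,\,c_b+\epsilon_b^{(m)})$, then argues explicitly with the definable continuous functions $b\mapsto c_b$ and $b\mapsto\epsilon_b$, establishing a uniform lower bound $\epsilon'$ on the box sizes and a uniform bound on the drift of $c_b$, and hand-constructs a common inner box. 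You instead avoid any manipulation of individual fibers by passing directly to the incidence set $Z=\{(a,b):a\in X_b\}$: after flattening $X$ and $Y$ to open subsets of $\RR^d$ and $\RR^e$, the constancy of the fiber dimension together with Fact~\ref{fact: 3}(iii) gives $\dim Z^* = d+e$, so Fact~\ref{fact: 3}(iv) forces $Z^*$ to contain a product box $U'\times V'$, which immediately produces the desired $X'$ and $Y'$. Your argument buys a substantial simplification by replacing the explicit $\epsilon$-$c$ bookkeeping with a single dimension count on the total space; what the paper's more hands-on version buys is a uniform box construction that, while not needed here, is more robust if one later wants quantitative control on the size of $X'$. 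Both are legitimate proofs resting on the same facts.
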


  We identify the underlying set of $\CC$ with $\RR^2$ in the standard way. Then every $\CC$-definable set can be viewed as an $\RR$-definable set. Note that if $X$ is $\CC$-definable and $\dim_\CC (X) =n$, then $\dim_\RR (X) = 2n$. Indeed, using Fact~\ref{fact: 2}(iii), it suffices to consider the case where $X$ is an affine variety. By Noether normalization lemma and Fact~\ref{fact: 3}(iii), we can further reduce it to the case where $X$ is $\CC^n$, and the desired conclusion follows from Fact~\ref{fact: 3}(i-ii).

\meno  Assume  $K$ is either $\RR$ or $\CC$ and $S$ is a subset of $K$. A {\bf decomposition} of $\C_{P,Q}(S)$ consists of a finite set $I$, and an $\RR$-definable family $\C^i_{P,Q}(S)=(C^i_{b_1, b_2})_{(b_1, b_2) \in S^2}$  for each $i \in I$ such that for all $(b_1, b_2) \in S^2$, we have $C_{b_1,b_2}^i\subseteq C_{b_1,b_2}$, and
$$   
\dim_\RR \Big( C_{b_1, b_2}  \setminus \bigcup_{i \in I} C^i_{b_1, b_2}\Big) < \dim_\RR (K).
$$ 
We say that  $\C^i_{P,Q}(S)$ for $i \in I$ as above is {\bf $K$-scattered} if for all $(b_1, b_2) \in S$ 
$$
\dim_\RR \big\{ (b'_1, b'_2) \in S^2 :   \dim_\RR (C^i_{b_1, b_2} \cap C^i_{b'_1, b'_2}) =\dim_\RR (K )\big\} <  \dim_\RR (K).
$$
Note that $\C_{P,Q}(S)$ forms a decomposition of itself. It is easy to see that this notion of being $K$-scattered coincides with the previous one because of the relationship between $\dim_\CC (X)$ and $\dim_\RR (X)$ for a $\CC$-definable set $X$.

\begin{lemma} \label{lem: of prop 1}
Assume $K$ is either $\RR$ or $\CC$, $S$ is a subset of $K$, and $I$ together with $\C^i_{P,Q}(S)$ for $i \in I$ form a decomposition of $\C_{P,Q}(S)$. 
If $\C_{P,Q}(S)$ is not scattered, then for some $i \in I$, $\C^i_{P,Q}(S)$ is not $K$-scattered.
\end{lemma}
\begin{proof}
Recall that $\C_{P,Q}(S)=(C_{b_1, b_2})_{(b_1, b_2) \in S^2}$, and $\C^i_{P,Q}(S)=(C^i_{b_1, b_2})_{(b_1, b_2) \in S^2}$ for $i \in I$.
For $(b_1, b_2) \in S^2$, let $Y_{b_1, b_2} = \{ (b'_1, b'_2) \in S^2 :   \dim_{\RR} (C_{b_1, b_2} \cap C_{b'_1, b'_2}) =\dim_{\RR} (K) \}$ and let 
$$
Y^i_{b_1, b_2} = \big\{ (b'_1, b'_2) \in S^2 :   \dim_{\RR} (C_{b_1, b_2} \cap C^i_{b'_1, b'_2}) = \dim_{\RR} (K)\big\}.
$$ 
Hence, $Y_{b_1, b_2}$ and $Y^i_{b_1, b_2}$ are $\RR$-definable for all $i \in I$ and $(b_1, b_2) \in K^2$ by Fact~\ref{fact: 3}(iii).
Note that 
$\dim_{\RR} (C_{b_1, b_2} \cap C_{b'_1, b'_2}) =\dim_{\RR} (K)$ if and only if $\dim_{K} (C_{b_1, b_2} \cap C_{b'_1, b'_2}) =1$ for all $(b_1, b_2)$ and $(b'_1,b'_2) $ in $S^2$, so $Y_{b_1, b_2}$ is also $K$-definable by Fact~\ref{fact: 3}(iii). 
From the assumption that $\C_{P,Q}(S)$ is not scattered, we obtain $(c_1, c_2) \in K^2$ with $\dim_K(Y_{c_1, c_2}) \geq 1$, or equivalently,  
$$
\dim_\RR(Y_{c_1,c_2}) \geq \dim_\RR (K).
$$
For any $(b'_1, b'_2) \in S^2 $, we have $\dim_{\RR} \big( C_{b'_1, b'_2}  \setminus \bigcup_{i \in I} C^i_{b'_1, b'_2}\big) < \dim_{\RR} K$, so Fact~\ref{fact: 3}(ii) implies there is $i \in I$ such that 
$$
\dim_{\RR} \big(C_{c_1, c_2} \cap C^i_{b'_1, b'_2}\big) \geq  \dim_{\RR} (K) .
$$
It follows that $Y_{c_1, c_2} = \bigcup_{i\in I} Y^i_{c_1, c_2}$. 
Using Fact~\ref{fact: 3}(ii), we get $i \in I$ such that  $\dim_{\RR} (Y^i_{c_1, c_2}) \geq  \dim_{\RR} (K)$. Fix such $i$. Note that $C_{c_1,c_2} \cap C^i_{b'_1,b'_2} \subseteq C_{c_1, c_2} $ and 
$$
\dim_{\RR} \big(C_{c_1, c_2} \cap C^i_{b'_1, b'_2}\big) = \dim_{\RR} (C_{c_1, c_2})  = \dim_{\RR} (K) \quad \text{ for any } (b'_1, b'_2) \in Y^i_{c_1, c_2}.
$$
So we can apply Fact~\ref{fact:101} to get $X' \subseteq K^2$ with $\dim_{\RR} (X') = \dim_\RR (K)$ and $Y' \subseteq K^2$  with $\dim_{\RR} (Y') \geq  \dim_{\RR} (K)$ such that 
$$  X' \subseteq C_{c_1, c_2} \cap C^i_{b'_1, b'_2} \quad \text{for  all } (b'_1, b'_2) \in Y' $$
Hence, we have $\dim (C^i_{b_1, b_2} \cap C^i_{b'_1, b'_2}) \geq \dim_\RR (K)$ for all $(b_1, b_2)$ and $(b_1',b_2')$ in $Y'$. The desired conclusion follows.
\end{proof}

\begin{proposition} \label{prop: special case 1}
If $K$ is either $\RR$ or $\CC$,  $P(x,y)$  and $Q(x,y)$ are individually either additive or multiplicative, and for all cofinite $S \subseteq K$, the family $\C_{P,Q}(S)$ is not scattered, then exactly one of the following possibilities holds:
\begin{enumerate}[{\rm (i)}]
    \item $P(x,y)$ and $Q(x,y)$ form an additive pair over $K$;
    \item $P(x,y)$ and $Q(x,y)$ form a multiplicative pair over $K$.
\end{enumerate}
Moreover, with $N =\deg P + \deg Q+1$, the same conclusion holds with the weaker assumption that $\C_{P,Q}(S)$ is not scattered for all cofinite $S \subseteq K$ with $|K \setminus S|< N$.
\end{proposition}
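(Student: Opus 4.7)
The plan is to translate the non-scattered hypothesis into rigid algebraic constraints on $P$ and $Q$, and then case-analyze on their additive/multiplicative structure. First I would translate non-scattered into a curve-matching condition: if $\C_{P,Q}(S)$ is not scattered, then for some $(b_1, b_2) \in S^2$ there is a positive-dimensional definable family $\Lambda \subseteq S^2$ of $(b_1', b_2')$ with $\dim_K(C_{b_1, b_2} \cap C_{b_1', b_2'}) = 1$. Each such overlap is witnessed (away from the finitely many ramification values of $f$ and $g$) by an algebraic correspondence $a \leftrightarrow a'$ satisfying $P(a, b_1) = P(a', b_1')$ and $Q(a, b_2) = Q(a', b_2')$ on the common piece; applying Fact~\ref{fact:101} to the definable family of overlap curves, I would extract a subfamily $\Lambda' \subseteq \Lambda$ of positive dimension on which the correspondence varies uniformly.

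Next I would case-analyze on the types of $P$ and $Q$. The overlap equations become $u_1(a) - u_1(a') = v_1(b_1') - v_1(b_1)$ and $u_2(a) - u_2(a') = v_2(b_2') - v_2(b_2)$ in the both-additive case, analogous multiplicative relations when both are multiplicative, and a hybrid in the mixed case. The mixed case I would rule out as follows: positive-dimensional $\Lambda'$ would force algebraic dependence between $u_1(a) - u_1(a')$ and $u_2(a)/u_2(a')$ as elements of $K(a, a')$, and setting $a' = 0$ and manipulating reduces this to a Cauchy-type equation $h(x + y) = h(x) h(y)$ for some nonconstant algebraic $h$. Since the only algebraic solution over $K$ is $h \equiv 1$, this would force $u_2$ constant, contradicting the nontrivial dependence of $Q$ on $x$. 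Thus the mixed case cannot produce a non-scattered family.

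In the both-additive case, the positive-dimensional family of overlaps forces algebraic dependence in $K[a, a']$ between the polynomials $u_1(a) - u_1(a')$ and $u_2(a) - u_2(a')$. A direct computation exploiting their antisymmetry under $a \leftrightarrow a'$ and decomposing polynomial relations into symmetric/antisymmetric parts shows $u_2 = \alpha u_1 + \beta$ for some $\alpha, \beta \in K$. A parallel argument---applied after switching the roles of $x$ and $y$, or by exploiting how $(b_1', b_2')$ sweeps out $\Lambda'$ under the forced relation $v_2(b_2') - \alpha v_1(b_1') = \text{const}$---yields $v_2 = \gamma v_1 + \delta$. Combining and absorbing constants into $f, g$ gives the additive pair form with $u = u_1, v = v_1$. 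In the both-multiplicative case, the analogous analysis applied to multiplicative overlaps yields $u_2 = c\, u_1^n$ and $v_2 = c'\, v_1^m$; choosing a primitive $u$ with $u_1 = u^{m_1}$, $u_2 = u^{m_2}$ and similarly for $v$, and absorbing the constants into $g$, gives the multiplicative pair form. The ``moreover'' part then follows by tracking the finitely many exceptional parameter values appearing in the argument---ramification values of $f, g$, critical values of the $u_i, v_i$, and parameters where dimensional arguments could degenerate---each controlled by a polynomial function of $\deg P$ and $\deg Q$, yielding the desired $N = N(\deg P, \deg Q)$.

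The principal obstacle I anticipate is the mixed case: reducing the overlap condition rigorously to the Cauchy-type functional equation and verifying that no nontrivial polynomial or rational solution exists over the algebraic closure. A secondary technical point is extracting the $v$-part linear or power relation in the compatible cases---whether this follows directly from the same non-scattered hypothesis via a careful bookkeeping of the parameter sweep across $\Lambda'$, or requires invoking a symmetric analysis on the dual family of curves parameterized by $x$-coordinates.
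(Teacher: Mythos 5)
Your high-level plan mirrors the paper's: decompose the non-scattered hypothesis into overlap correspondences, peel off $f$ and $g$ on open sets where they are injective, and then analyze the correspondence equations case by case, ruling out the mixed (additive/multiplicative) case. But the engine that powers each case in your proposal — a reduction to the Cauchy functional equation in the mixed case, and a ``symmetric/antisymmetric decomposition'' in the pure cases — diverges from the paper, and as sketched it has real gaps.

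The most serious issue is the mixed case. You claim that a positive-dimensional overlap family ``forces algebraic dependence between $u_1(a) - u_1(a')$ and $u_2(a)/u_2(a')$,'' and then that ``setting $a' = 0$'' reduces this to $h(x+y) = h(x)h(y)$. Neither step is justified. Positive-dimensional fibers over a positive-dimensional base of $(\alpha,\beta) = (v_1(b_1')-v_1(b_1),\, v_2(b_2')/v_2(b_2))$ does \emph{not} by itself force the map $(a,a') \mapsto (u_1(a)-u_1(a'),\, u_2(a)/u_2(a'))$ to have one-dimensional image — a two-dimensional image can still have a one-dimensional exceptional locus of one-dimensional fibers. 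And even if one had a genuine functional relation, ``setting $a'=0$'' is problematic since the overlap correspondence only lives on a curve in $(a,a')$-space that need not meet $\{a'=0\}$; the passage from the correspondence to a functional equation $g(s)/g(t)=h(s-t)$ presupposes that $u_2$ factors through $u_1$ on a full-dimensional set, which is precisely what you want to prove. The paper sidesteps all this by working \emph{locally}: it parametrizes each overlap by a definable differentiable $\lambda$ with $u_1(x)+v_1(b_1)=u_1(\lambda(x))+v_1(b_1')$ and $u_2(x)v_2(b_2)=u_2(\lambda(x))v_2(b_2')$, differentiates, takes a ratio, and deduces that the rational function $\tfrac{u_1'u_2}{u_2'}$ is $\lambda$-invariant. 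Since this function is nonconstant (numerator degree strictly exceeds denominator degree), it is a homeomorphism on a suitable cell, forcing $\lambda=\mathrm{id}$, hence $v_1(b_1)=v_1(b_1')$ and $v_2(b_2)=v_2(b_2')$ — a finite constraint, contradicting positive dimension. The identification of $\tfrac{u_1'u_2}{u_2'}$ (and its analogues $\tfrac{u_1'u_2}{u_2'u_1}$, $\tfrac{u_1'}{u_2'}$ in the other cases) as a ``rigidity invariant'' is the creative heart of the argument and is absent from your sketch.

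A secondary gap is in the pure cases: you assert that ``antisymmetry'' and a ``symmetric/antisymmetric decomposition'' yield $u_2 = \alpha u_1 + \beta$, but this is not developed, and I do not see how it goes through. In the paper the same differentiate-and-take-ratio mechanism, now with $\tfrac{u_1'}{u_2'}$ (resp.\ $\tfrac{u_1'u_2}{u_2'u_1}$), carries the argument: nonconstancy of the invariant contradicts the overlap hypothesis, so the invariant is constant, which pins down $u_1, u_2$. Finally, your anticipation of the ``principal obstacle'' is correct but understates it — the Cauchy reduction is not merely technically delicate, it is not a consequence of the hypotheses as stated. You should replace it with the paper's local differential argument. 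Your treatment of the $v$-side relation and the ``moreover'' bound $N=N(\deg P,\deg Q)$ is consistent in spirit with the paper, so I have no further concerns there.
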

\begin{proof} Suppose $K$ is either $\RR$ or $\CC$. We first prove the following special case of the proposition.
\begin{claim}\label{clm:x+y and xy}
 Suppose $P(x,y)=u_1(x) + v_1(y)$ and $Q(x,y) = u_2(x)v_2(y)$ where $u_1$, $v_1$, $u_2$, $v_2$ are nonconstant univariate polynomials with coefficients in $K$, and 
 $$ S = \{b\in K : v_1(b) \neq 0, v_2(b)\neq 0\}. $$
Then  $\C_{P,Q}(S)$ is scattered.  \medskip

\noindent\emph{Proof of Claim~\ref{clm:x+y and xy}.} Towards a contradiction, we  assume that $\C_{P,Q}(S)$ is not scattered.  As $u_1(x)$ and $u_2(x)$ are nonconstant, $u_1'(x)$ and $u_2'(x)$ are nonzero. The numerator has higher degree than the denominator in the fraction $\frac{u'_1u_2}{u_2'}(x)$, and so $\frac{u'_1u_2}{u_2'}(x)$ gives us a nonconstant rational function. Hence, we can find a finite $\RR$-definable family  $(U_i)_{i \in I}$ of Euclidean open subsets of $K$ with $$\dim_\RR ( K \setminus \bigcup_{i \in I} U_i )< \dim_\RR (K)$$ and such that for each $i \in I$, $u'_2(x)$ is nonzero on $U_i$, the functions  $u_1(x)$, $u_2(x)$, and $\frac{u'_1u_2}{u_2'}(x)$ induce  $\RR$-definable differentiable homeomorphisms from the $U_i$ to the respective images. For $i \in I$ and $(b_1, b_2) \in K^2$, let $\C^i_{P,Q}(S)$ be the $\RR$-definable family$( C^i_{b_1, b_2}  )_{(b_1, b_2) \in S^2} $ with
$$C^i_{b_1, b_2} = \{  (u_1(a)+ v_1 (b_1), u_2(a)v_2(b_2)) : a \in U_i\}. $$
By Fact~\ref{fact: 3}(iii), the set $I$ together with $\C^i_{P,Q}(S)$ form a decomposition of $\C_{P,Q}(S)$. 
As $\C_{P,Q}(S)$ is not scattered, by Lemma~\ref{lem: of prop 1}, there is $i \in I$ such that $\C^i_{P,Q}(S)$ is not scattered. We obtain, in particular,   $i \in I$ and $(b_1, b_2) \in S^2
$ such that 
$$
\dim_\RR \big\{ (b'_1, b'_2) \in S^2 :   \dim_\RR (C^i_{b_1, b_2} \cap C^i_{b'_1, b'_2}) =\dim_\RR (K )\big\} =  \dim_\RR (K).
$$
Fix $(b'_1, b'_2) \in S^2$ such that
$\dim_\RR (C^i_{b_1, b_2} \cap C^i_{b'_1, b'_2}) = \dim_\RR (K)$, and set 
$$\rho: U_i \to C^i_{b_1, b_2},  a \mapsto (u_1(a)+ v_1(b_1), u_2(a)v_2(b_2))$$
and
$$\rho':  U_i \to C^i_{b'_1, b'_2}, a \mapsto (u_1(a)+ v_1(b'_1), u_2(a)v_2(b'_2)).$$ By our choice of $U_i$, the maps $\rho$ and $\rho'$ are $\RR$-definable  homeomorphisms from $U_i$ onto their respective images. Using Fact~\ref{fact: 3}(vi), choose $C' \subseteq C^i_{b_1, b_2} \cap C^i_{b'_1, b'_2}$ such that $C'$ is an $\RR$-definable open subset of $C^i_{b_1, b_2}$  with  $\dim_\RR (C') = \dim_\RR (K)$.  Set 
$$U' =\rho^{-1}(C') \quad \text{and}\quad  \lambda: U' \to U_i, a \mapsto (\rho')^{-1}(\rho(a)) .$$ Then $U'$ is a definable open subset of $U_i$, the function $\lambda: U' \to U_i $ is $\RR$-definable, analytic, and a homeomorphism onto its image,  and 
$$ (u_1(x)+ v_1(b_1), u_2(x)v_2(b_2)) = (u_1(\lambda(x))+ v_1(b'_1), u_2(\lambda(x))v_2(b'_2)) \text{ on }  U'. $$
Differentiating  $u_1(x)+ v_1(b_1) = u_1(\lambda(x)) +v_1(b'_1) $, we get $ u'_1(x) = u_1'( \lambda(x))\lambda'(x) \text{ on } U'.$
Taking logarithmic derivative of both sides of  $u_2(x)v_2(b_2) = u_2(\lambda(x))v_2(b'_2) $, we get
$$  \frac{u'_2(x)}{ u_2(x)} = \frac{u'_2(\lambda(x))}{ u_2(\lambda(x))} \lambda'(x) \text{ on }  U'.   $$
Dividing the two equations involving $\lambda'(x)$, we get
$$ \frac{u'_1u_2}{u_2'}(x) = \frac{u'_1u_2}{u_2'}(\lambda(x)) \text{ on }  U'. $$
As we have arranged that $\frac{u_1'u_2}{u_2'}$ is a homeomorphism on $U_i$, $\lambda(x) =x$ on $U'$. It follows that $v_1(b_1) = v_1(b'_1)$ and $v_2(b_2) = v_2(b'_2)$. This is a contradiction, as it implies that the set $ \{(b'_1,b'_2)  \in S^2 :   \dim_\RR (C^i_{b_1, b_2} \cap C^i_{b'_1, b'_2}) =\dim_\RR (K) \} $ is finite, which contradicts our earlier choice of $i\in I$ and $(b_1,b_2) \in S^2$.
\end{claim}

The next claim is a strengthening of Claim~\ref{clm:x+y and xy}.

\begin{claim}\label{clm:add and multip}
Suppose $P(x,y) =f( u_1(x) + v_1(y))$ and $Q(x,y) =g( u_2(x)v_2(y))$ where $f, g, u_1, v_1, u_2, v_2$ are univariate polynomials with coefficients in $K$,
 and 
 $$ S = \{b\in K : v_1(b) \neq 0, v_2(b)\neq 0\}. $$
Then $\C_{P,Q}(S)$ is scattered.
\medskip

\noindent\emph{Proof of Claim~\ref{clm:add and multip}.}  
Towards a contradiction, we  assume that $\C_{P,Q}(S)$ is not scattered.
Note that $f, g, u_1, v_1, u_2, v_2$ are nonconstant as $P(x,y)$ and $Q(x,y)$ have nontrivial dependence on $x$ and $y$.
In particular, this allows us to obtain a finite $\RR$-definable family  $(U_i)_{i \in I}$  of Euclidean open subsets of $K$ where  $K \setminus \bigcup_{i \in I} U_i$ is finite such that $f$ and $g$ are injective on $U_i$ for each $i \in I$. Let $\D_{P,Q}(S)=(D_{b_1,b_2})_{(b_1,b_2) \in S^2}$ be the $K$-definable family with
\[
D_{b_1, b_2} = \{  (u_1(a)+ v_1 (b_1), u_2(a)v_2(b_2)) : a \in K\} .
\]
The family $\C_{P,Q}(S) =(C_{b_1, b_2})_{(b_1,b_2) \in S^2}$ satisfies $C_{b_1,b_2} = (f \times g) D_{b_1,b_2}$.
For $(i_1,i_2) \in I^2$ and $(b_1, b_2) \in K^2$, set  
\[
D^{i_1, i_2}_{b_1, b_2} = D_{b_1, b_2}  \cap (U_{i_1}\times U_{i_2})\quad \text{and} \quad  C^{i_1, i_2}_{b_1, b_2}=(f \times g) D^{i_1, i_2}_{b_1, b_2}. 
\]
As $K \setminus \bigcup_{i \in I} U_i$ is finite and  $u_1(x)+ v_1 (b_1)$ and $u_2(x)v_2(b_2) $ are nonconstant polynomials for all $(b_1, b_2)$ is in $S^2$, we get $\dim_\RR( D_{b_1, b_2}  \setminus \bigcup_{(i_1, i_2) \in I^2} D^{i_1, i_2}_{b_1,b_2} ) < \dim_\RR (K)$ for all $(b_1, b_2)$ in $S^2$. It follows that 
\[
\dim_\RR\Big( C_{b_1, b_2}  \setminus \bigcup_{(i_1, i_2) \in I^2} C^{i_1, i_2}_{b_1,b_2} \Big) < \dim_\RR (K) \quad \text{for all } (b_1,b_2) \in S^2.
\]
As $\C_{P,Q}(S)$ is not scattered, we can apply Lemma~\ref{lem: of prop 1} to get $(i_1,i_2) \in I^2$ and $(b_1, b_2) \in S^2$ such that 
\[
\dim_\RR \big\{ (b'_1, b'_2) \in S^2 :   \dim_\RR (C^{i_1, i_2}_{b_1, b_2} \cap C^{i_1,i_2}_{b'_1, b'_2}) =\dim_\RR (K) \big\} \geq \dim_\RR (K).
\]
It is easy to see that $f\times g$ is an  injective and definable map on $U_{i_1} \times U_{i_2}$. By Fact~\ref{fact: 3}(iii),  $\dim_\RR (D^{i_1, i_2}_{b_1, b_2} \cap D^{i_1, i_2}_{b'_1, b'_2}) =\dim_\RR (K)$ whenever $\dim_\RR (C^{i_1, i_2}_{b_1, b_2} \cap C^{i_1, i_2}_{b'_1, b'_2}) =\dim_\RR (K)$ for $(b_1, b_2)$ and $(b'_1, b'_2)$ in $S^2$. Hence, $\D_{P,Q}(S)$ is not scattered, contradicting Claim 1.
\end{claim}

Note that Claim~\ref{clm:add and multip} in particular implies that the two possibilities in the proposition are mutually exclusive. It remains to show under the assumption of the theorem that $P(x,y)$ and $Q(x,y)$ must form an additive pair when they are individually additive, and $P(x,y)$ and $Q(x,y)$ must form a multiplicative  pair when they are individually multiplicative. 

Suppose $P(x,y)$ and $Q(x,y)$ are individually multiplicative. Assume that 
$$P(x,y) = f(u_1(x)v_1(y)) \quad \text{and}  \quad Q(x,y) = g(u_2(x)v_2(y)).$$
where  $f, g, u_1, v_1, u_2, v_2$ are nonconstant univariate polynomials. If $u^m_1(x) = eu^n_2(x)$ and $v^{m'}_1(y) = e'v^{n'}_2(y)$ for $m, n, m', n' \in \NN^{\geq 1}$ and $e, e' \in \CC$, then $P(x,y)$ and $Q(x,y)$ forms a multiplicative pair. So we are left with the case where
$$u^m_1(x) \neq eu^n_2(x)\quad  \text{for all } m, n \in \NN^{\geq 1} \text{ and }e \in \CC.$$
We will in fact show that this cannot happen.
If $\frac{u'_1u_2}{u'_2u_1} =c$ with $c \in \CC$, then by considering behavior when $|x|$ is large, we see that $c$ is a rational number $m \slash n$ and $u_1^m(x) = eu_2^n(x)$  and $e \in \CC$.  Hence, $\frac{u'_1u_2}{u'_2u_1}$ is not a constant function. We then deduce the contradiction 
using the same argument in the proof of Claim~\ref{clm:x+y and xy} and Claim 2 substituting the role of $\frac{u_1'u_2}{u'_2}(x)$ with that of $\frac{u'_1u_2}{u'_2u_1}$.

Similarly, we can rule out the case where $P(x,y)$ and $Q(x,y)$ are each additive but do not form an additive pair using $\frac{u'_1}{u'_2}$ in the place of $\frac{u_1'u_2}{u'_2}(x)$. We are left with the desired possibilities that $P(x,y)$ and $Q(x,y)$ either form an additive pair or a multiplicative pair. Finally, note that with $S$ chosen  in Claim~1 and Claim~2, we have $|K\setminus S|< \deg P+\deg Q+1$, so the same conclusion can be reached under the weaker assumption of the second statement.
\end{proof}

\begin{proof}[Proof of Theorem~\ref{thm:main}]
Let $N =\deg P+ \deg Q+1$ as in Proposition~\ref{prop: special case 1}, and set $\alpha = \alpha( \deg P, \deg Q, N)$  as in Corollary~\ref{cor:st}. So $\alpha$ depends only on $\deg P$ and $\deg Q$, and we can write $\alpha = \alpha(\deg P, \deg Q)$.
Suppose $P(x,y)$ and $Q(x,y)$ do not form a $(5/4, \alpha)$-expanding pair.
Using Corollary~\ref{cor:structure}, we can assume that  $P(x,y)$ and $Q(x,y)$ are each either additive or multiplicative over $K$.
By Corollary~\ref{cor:st}, the family $\C_{P,Q}(S)$ is not scattered for all cofinite $S \subseteq K$ with $|K\setminus S|<N$. Applying Proposition~\ref{prop: special case 1} yields that at least one of the three cases in the statement of the theorem holds, and further the additive and multiplicative cases are mutually exclusive. The conclusion then follows from Proposition~\ref{prop: additivemultiplicativenonexpanding}.
\end{proof}


\begin{remark} \label{Rem: Generalization remark}
The same strategy with obvious modifications will also allow us to prove analogues of Proposition~\ref{prop: special case 1} in more general settings where $P(x,y)$ and $Q(x,y)$ are replaced by definable binary functions in o-minimal expansions of $\RR$ or binary analytic functions restricted to  bounded open sets of $\CC$. Functions of the latter type can be interpreted in an o-minimal expansion of $\RR$, and we expect them to be useful when we consider generalized sum-product phenomena for complex analytic functions, but the corresponding $A$ and $B$ are finite subsets of a fixed bounded open subset of $\CC$. 

As mentioned in the introduction, the analogue of the main theorem should hold in much more general settings. The missing ingredients include appropriate analogue of Szemer\'edi--Trotter Theorem (and Corollary~\ref{cor:st}) and of the Elekes--R\'onyai Theorem in these settings. For o-minimal expansion of $\RR$, the analogue of the Szemer\'edi--Trotter Theorem is known \cite{BasuRaz}. There are not yet strict counterparts of Elekes--R\'onyai results for many o-minimal expansion of $\RR$, but the results in~\cite{ArtemSergeiESomin} are along this direction.
\end{remark}


\begin{remark}\label{Rem: Second comment}
As mentioned earlier in the introduction, a qualitative version of Theorem~\ref{thm:main} without exponent in (i) and without precise form of $P(x,y)$ and $Q(x,y)$ in (ii) and (iii) can be deduced from a result by Bays and Breuillard~\cite[Theorem~1.4]{MartinBays}. They show the following: For a  given algebraic variety $V \subseteq \CC^{k}$ of dimension $d$, if for every $\varepsilon$, there is arbitrarily large $n$ and a grid $A_1 \times \ldots \times A_k \subseteq \CC^k$ with $|A_1| =\ldots =|A_k| =n$ and 
$$|V\cap (A_1 \times \ldots \times  A_k)| > n^{d-\varepsilon},$$
then $(\CC^k, V)$ is in a ``coordinate-wise correspondence'' to a pair $(G, H)$ where the $k$-dimensional group $G$ is a product $G_1^{k_1} \times \ldots \times G_l^{k_l}$ of power of $1$-dimensional algebraic group over the complex numbers, and the $d$-dimensional group $H$ is $H_1 \times \ldots \times H_l$ with $H_i$ a connected algebraic subgroup of $G_i^{k_i}$ for $i \in \{1, \ldots, l\}$.

For given $P(x,y)$ and $Q(x,y)$, set 
$V=\{(a,b,c,d) \subseteq \CC^4 : c= P(a,b), d= Q(a,b)\}.$
Then $V$ has dimension $2$.
Suppose $P(x,y)$ and $Q(x,y)$ do not form a $(1+\varepsilon, \alpha)$ expanding pair for any $\varepsilon, \alpha \in \RR^{>0}$. Then for every $\varepsilon>0$, we can choose arbitrarily large $n$, $A, B \subseteq \CC$ with $|A|=|B|=n$ and $\max{|P(A,B)|, |Q(A,B)|}< n^{1+\varepsilon}$. By the Pigeonhole principle, we can choose $C\subseteq P(A,B)$, and $D \subseteq Q(A,B)$ with $|A|=|B|=|C|=|D|=n$ such that 
$$|V\cap (A\times B \times C \times  D)| > n^{2-2\varepsilon}.$$
Hence, applying the above result and unpacking the notion of ``coordinate-wise correspondence'', we can show that $(\CC^4, V)$ is in a coordinatewise correspondence with $(G^4,H)$ where $G$ is a 1-dimensional abelian algebraic group over the complex numbers, and $H$ a connected subgroup of $G^4$. By standard arguments, one can show that there is open $U \subseteq \CC$  such that on $U$
$$ P(x,y) =  f( u_1(x)+_Gv_1(y)) \text{ and }   Q(x,y) = g(u_2(x)+_Gv_2(y)) $$
with $u_1, v_1, u_2, v_2: U \to G$, $f: (u_1(U)+_G v_1(U)) \to \CC$, and $g: (u_2(U)+_G v_2(U)) \to \CC$  are analytic maps. One can imagine using the assumption that $P(x,y)$ and $Q(x,y)$ are polynomials to show that $G$ must either be the additive or multiplicative groups, and $u_1$, $v_1$, $u_2$, $v_2$, $f$, $g$ must be polynomials of the form in Theorem~\ref{thm: new main}. We do not pursue this further in the current paper.
\end{remark}

\section{Proofs of Theorems~\ref{thm: new main}, \ref{thm:sym}, and other results}

\begin{lemma} \label{lem: uniqueness1}
Suppose $K$ has $\fchar K = 0$, $f$, $\hat{f}$, $u$, $\hat{u}$, $v$, $\hat{v}$ are nonconstant univariate polynomials with coefficients in $K$ such that $f$ and $\hat{f}$ are monic, $u$, $\hat{u}$, $v$, $\hat{v}$ each have constant coefficient equal to $0$, and 
$$  f(u(x)+v(y)) = \hat{f}(\hat{u}(x) + \hat{v}(y)) \quad\text{ for all }x,y\in K.  $$
Then we must have $f=\hat{f}$, $u=\hat{u}$ and $v= \hat{v}$.
\end{lemma}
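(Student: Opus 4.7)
My plan is to differentiate the identity to separate the contributions of $x$ and $y$, then integrate using the normalizations to recover $u = c\hat{u}$ and $v = c\hat{v}$ for a single scalar $c$, and finally match $f$ against $\hat{f}$.

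First, differentiating $f(u(x)+v(y)) = \hat{f}(\hat{u}(x)+\hat{v}(y))$ in $x$ and in $y$ gives
\begin{align*}
f'(u(x)+v(y))\,u'(x) &= \hat{f}'(\hat{u}(x)+\hat{v}(y))\,\hat{u}'(x), \\
f'(u(x)+v(y))\,v'(y) &= \hat{f}'(\hat{u}(x)+\hat{v}(y))\,\hat{v}'(y).
\end{align*}
Since $f, \hat{f}$ are nonconstant and $\fchar K = 0$, the polynomials $f'$ and $\hat{f}'$ are nonzero, so dividing the two identities yields, in $K(x,y)$, the separable identity
\[
u'(x)\,\hat{v}'(y) \;=\; \hat{u}'(x)\,v'(y).
\]
Separation of variables then produces a constant $c \in K^{\times}$ with $u'(x) = c\,\hat{u}'(x)$ and $v'(y) = c\,\hat{v}'(y)$. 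I would then antidifferentiate, using characteristic zero to invoke formal integration and the vanishing-constant-term hypotheses on $u, \hat{u}, v, \hat{v}$ to fix the constants of integration, yielding $u(x) = c\,\hat{u}(x)$ and $v(y) = c\,\hat{v}(y)$.

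Substituting these back into the original identity gives $f\bigl(c(\hat{u}(x)+\hat{v}(y))\bigr) = \hat{f}(\hat{u}(x)+\hat{v}(y))$. Because $\hat{u}(x)+\hat{v}(y)$ is a nonconstant bivariate polynomial, the expression $T = \hat{u}(x)+\hat{v}(y)$ takes infinitely many values, so the identity descends to the univariate identity $f(ct) = \hat{f}(t)$ in $K[t]$. Matching leading coefficients of the two monic polynomials forces $\deg f = \deg \hat{f} =: n$ and $c^{n} = 1$. A final coefficient-by-coefficient comparison in the identity $f(ct)=\hat{f}(t)$, combined with the normalization conventions embedded in the hypothesis (which the intended application enforces so as to rule out nontrivial roots of unity), pins down $c = 1$, and hence $f = \hat{f}$, $u = \hat{u}$, and $v = \hat{v}$.

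The main obstacle I anticipate lies precisely in this last step: the monicity hypothesis by itself only constrains $c$ to be an $n$-th root of unity, as illustrated by the observation that replacing $(u,v,f(t))$ by $(-u,-v,f(-t))$ leaves the left-hand side invariant. Completing the argument therefore requires squeezing one additional constraint out of either the coefficient matching in $f(ct)=\hat{f}(t)$ or an implicit normalization (e.g.\ on the leading coefficient of $u$ relative to $\hat{u}$) inherent to how the lemma is invoked downstream. Everything else in the proof is routine bookkeeping with polynomial identities.
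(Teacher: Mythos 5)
Your argument reproduces the paper's own proof almost line for line: differentiate in each variable, divide to get $u'(x)/\hat{u}'(x)=v'(y)/\hat{v}'(y)$, observe that both sides must equal a constant $c$, antidifferentiate using the zero-constant-term normalization, and then compare $f(ct)$ with $\hat{f}(t)$. The difference is that the paper simply asserts ``$c=1$ since $f$ and $\hat f$ are monic,'' whereas you correctly notice that monicity only yields $c^{n}=1$ with $n=\deg f$. Your scepticism is warranted: this is a genuine slip in the paper, and the lemma as stated is in fact false. Take $f=\hat f=t^{2}$, $u=v=x$, $\hat u=\hat v=-x$; every hypothesis holds ($t^{2}$ is monic, $\pm x$ are nonconstant with zero constant term, and $(x+y)^{2}=(-x-y)^{2}$), yet $u\neq\hat u$. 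This is exactly your $(-u,-v,f(-t))$ observation, with the leading factor $(-1)^{n}$ disappearing because $n$ is even.

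So the obstacle you ran into at the end is not a gap in your own reasoning but a defect in the statement. The provable conclusion is the weaker one you already reached: there is an $n$-th root of unity $c$ with $u=c\hat u$, $v=c\hat v$, and $f(ct)=\hat f(t)$. Inspecting the two places the paper invokes the lemma, this weaker version is what is actually needed. In the proof of Theorem~\ref{thm:sym}, only the fact that the second pair of inner polynomials are scalar multiples of the first is used, which survives the extra factor of $c$. In the Galois-descent step of Proposition~\ref{prop: transfer}, the weaker version produces a cocycle $\sigma\mapsto c_{\sigma}$ of roots of unity that one must trivialize with a short additional argument (or one can instead normalize the inner polynomials to be monic, which forces $c=1$ immediately from $u=c\hat u$). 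In short: you identified the correct approach, located the real gap, and your ``anticipated obstacle'' is not something to be squeezed away by bookkeeping --- the hypotheses as written genuinely do not pin down $c$, and the lemma needs either a strengthened hypothesis or a weakened conclusion.
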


\begin{proof}
Note that as $\fchar K = 0$, the polynomials $u'$,$v'$,$\hat{u}'$, and $\hat{v}'$ are nonzero.
Taking partial derivatives with respect to $x$ and $y$ and manipulating the equations, we get
$$ \frac{u'(x)}{\hat{u}'(x)} = \frac{v'(y)}{\hat{v}'(y)}.  $$
Hence, both sides of the equation must be equal to a constant $c \in K$. Hence, $u(x)=c\hat{u}(x)+d$ and $v(y)=c\hat{v}(y)+e$. By the assumptions on $u$, $\hat{u}$, $v$, and $\hat{v}$ we see that $d=e=0$. Further, $c=1$ since $f$ and $\hat{f}$ are monic. The desired conclusion follows.
\end{proof}

  The following fact consists of instances of the so-called model-theoretic transfer principle. It is a consequence of the compactness theorem~\cite[Theorem 2.1.4]{Marker}, the downward L\"owenheim--Skolem theorem \cite[Theorem 2.3.7]{Marker}, and the familiar algebraic fact that every countable field of characteristic $0$ is isomorphic to a subfield of $\CC$.

\begin{fact}\label{fact: model trans}
Suppose $\sigma$ is a first-order statement in the language of fields. If $\sigma$ holds for $\CC$, then $\sigma$ holds for all algebraically closed fields with characteristic $0$. If $\sigma$ holds for all subfields of $\CC$, then $\sigma$ holds for all fields with characteristic $0$. If $\sigma$ holds for all fields with characteristic $0$, then there is $N= N(\sigma)$ such that $\sigma$ holds for all fields with characteristic at least $N$.
\end{fact}

\begin{lemma} \label{lem: uniqueness2}
Suppose $K$ has $\fchar K = 0$, $f$, $\hat{f}$, $u$, $\hat{u}$, $v$, $\hat{v}$ are nonconstant univariate polynomials with coefficients in $K$ such that  $u$, $\hat{u}$, $v$, $\hat{v}$ are monic, neither $u(x)v(y)$ nor $\hat{u}(x)\hat{v}(y)$ can be written as $\tilde{u}^n(x)\tilde{v}^n(y)$ where $\tilde{u}$ and $\tilde{v}$ are univariate polynomials over $K$, and $n \geq 2$. If 
$$  f(u(x)v(y)) = \hat{f}(\hat{u}(x)\hat{v}(y)), $$
then we must have $f=\hat{f}$, $u=\hat{u}$ and $v= \hat{v}$.
\end{lemma}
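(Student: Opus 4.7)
The plan mirrors the proof of Lemma~\ref{lem: uniqueness1}. I would differentiate the identity $f(u(x)v(y)) = \hat{f}(\hat{u}(x)\hat{v}(y))$ separately with respect to $x$ and $y$, then divide to eliminate $f'$ and $\hat f'$. Rearranging the result so that the $x$-dependent factors stand on one side and the $y$-dependent factors on the other yields
$$\frac{u'(x)\hat u(x)}{u(x)\hat u'(x)} = \frac{v'(y)\hat v(y)}{v(y)\hat v'(y)}.$$
Since one side depends only on $x$ and the other only on $y$, both equal a constant $c\in K$, giving $u'/u = c\,\hat u'/\hat u$ and $v'/v = c\,\hat v'/\hat v$.

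Next I would show $c\in\QQ^{>0}$. Over $\bar K$, the partial fraction expansion of $u'/u$ is $\sum_i a_i/(x-\alpha_i)$ where the $a_i\in\NN^{\geq 1}$ are the multiplicities of the roots of $u$; matching this with $c\sum_j b_j/(x-\beta_j)$ forces $u$ and $\hat u$ to share the same roots with multiplicities in the ratio $c$, so $c=p/q$ for coprime $p,q\in\NN^{\geq 1}$. The identity $qu'/u = p\hat u'/\hat u$ then rewrites as $(u^q/\hat u^p)'=0$, so $u^q = \hat u^p$ in $K[x]$ (the multiplicative constant is $1$ by monicity), and analogously $v^q = \hat v^p$.

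The third step produces monic $\tilde u,\tilde v\in \bar K[x]$ with $u=\tilde u^p$, $\hat u=\tilde u^q$, $v=\tilde v^p$, $\hat v=\tilde v^q$: indeed, coprimality of $p$ and $q$ together with unique factorization in $\bar K[x]$ forces every multiplicity $a_i$ of a root of $u$ to be divisible by $p$, so $\tilde u := \prod_i (x-\alpha_i)^{a_i/p}$ works. To descend $\tilde u$ to $K[x]$, I would invoke that $\mathrm{Gal}(\bar K/K)$ permutes the roots of $u\in K[x]$ while preserving their multiplicities; hence $\tilde u$ is Galois-invariant and lies in $K[x]$, and it is monic by construction. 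The same argument delivers $\tilde v\in K[x]$ monic.

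Finally, $u(x)v(y)=\tilde u(x)^p\tilde v(y)^p$ together with the assumption on $u(x)v(y)$ forces $p=1$, and $\hat u(x)\hat v(y)=\tilde u(x)^q\tilde v(y)^q$ forces $q=1$, so $c=1$ and $u=\hat u$, $v=\hat v$. Substituting back, $f$ and $\hat f$ agree on the infinite image of $(x,y)\mapsto u(x)v(y)$, hence $f=\hat f$. I expect the only delicate point to be the Galois-theoretic descent ensuring $\tilde u,\tilde v\in K[x]$; the rest is single-variable polynomial algebra parallel to Lemma~\ref{lem: uniqueness1}.
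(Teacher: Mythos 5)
Your proof is correct and tracks the paper's argument closely: differentiate, divide to separate variables, get a constant $c$, show $c\in\QQ^{>0}$, extract a common ``root'' $\tilde u,\tilde v$, and then invoke the non-power hypothesis to force the exponents to be $1$. There are two places where you take a different (but perfectly sound) technical route. First, to prove $c\in\QQ^{>0}$ the paper invokes the model-theoretic transfer principle to embed $K$ into $\CC$ and then compares degrees by sending $|x|\to\infty$; you instead use uniqueness of partial fraction expansions over $\bar K$, which sidesteps the transfer entirely and is arguably cleaner since it stays purely algebraic. Second, you factor $u^q=\hat u^p$ over $\bar K[x]$ and then flag the descent of $\tilde u$ to $K[x]$ as ``the only delicate point,'' handling it by Galois invariance (valid, since $\fchar K=0$ makes $\bar K/K$ Galois). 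But that detour is avoidable: since $\gcd(p,q)=1$, unique factorization in $K[x]$ itself already gives $p\mid a$ and $q\mid b$ for the exponents $a,b$ of any irreducible factor $w$ of $u$ in $K[x]$ (from $qa=pb$), so $\tilde u$ can be built directly inside $K[x]$ as $\prod_w w^{a_w/p}$ — this is exactly what the paper does, and it removes the delicate point you were worried about. Everything else, including the final step deducing $f=\hat f$ from agreement on the infinite image of $u(x)v(y)$, matches the paper.
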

\begin{proof}
By Fact~\ref{fact: model trans}, we can reduce to the case where $K\subseteq \CC$. Taking partial derivatives with respect to $x$ and $y$, and manipulating the equations we get
$$ \frac{u'(x)\hat{u}(x)}{\hat{u}'(x) u(x)} = \frac{v'(y)\hat{v}(y)}{\hat{v}'(y) v(y)}.  $$
Hence, both sides of the equation must be equal to a constant $c \in \CC$. Letting $|x|$ go to infinity, we get $c = \deg{u} \slash \deg \hat{u} = m \slash n$ with $m\slash n$ a rational number in lowest terms. So $n u'(x) \slash u(x) =  m  \hat{u}'(x) /\hat{u}(x)$. Integrating and taking exponential, we get $u^n = d\hat{u}^m$ with $d \in \CC$. By the assumption that $u$ and $\hat{u}$ are monic, $d=1$ and $u^n = \hat{u}^m$. As $K[x]$ is a unique factorization domain, we obtain a univariate polynomial $\tilde{u}$ with coefficients in $K$ such that $ u(x) = \tilde{u}^m(x)$ and $\hat{u}(x) = \tilde{u}^n(x)$. Likewise, we get a univariate polynomial $\tilde{v}$ with coefficients in $K$ such that $ v(y) = \tilde{v}^m(y)$ and $\hat{v}(y) = \tilde{v}^n(y)$. Then $u(x)v(y) = \tilde{u}^m(x) \tilde{v}^m(y)$ and $\hat{u}(x)\hat{v}(y) = \tilde{u}^n(x) \tilde{v}^n(y)$.
By the assumption on $u$, $\hat{u}$, $v$, and $\hat{v}$, we must have $m=n=1$. The desired conclusion follows.
\end{proof}

  We are now ready to deduce Theorems~\ref{thm:sym} and \ref{thm: new main}.
\begin{proof}[Proof of Theorem~\ref{thm:sym}]
Suppose we are not in case (i) of Theorem~\ref{thm:sym}. Then, applying Theorem~\ref{thm:main} on the pair of polynomials $(P(x,y),Q(x,y))$, we find ourselves in case (ii) or (iii) of Theorem~\ref{thm:main}. Suppose we are in case (ii) of Theorem~\ref{thm:main}. Then we get
\[
P(x,y)=f(\gamma_1u(x)+\delta_1v(y)) \quad\text{and}\quad Q(x,y)=g(\gamma_2u(x)+\delta_2v(y)),
\]
where we can further assume that $f$ and $g$ are monic and $u$ and $v$ have constant coefficient 0.
Applying Theorem~\ref{thm:main} again, now on the pair $(\hat{P}(x,y),Q(x,y))$ with $\hat{P}(x,y) = P(y,x)$, we end up in case (ii) of Theorem~\ref{thm:main} again due to mutual exclusivity of the cases of Theorem~\ref{thm:main}. Combined with an application of Lemma~\ref{lem: uniqueness1} on $Q(x,y)$, we get
\[
\hat{P}(x,y)=\hat{f}(\hat{\gamma}_1u(x)+\hat{\delta}_1v(y)) \quad\text{and}\quad Q(x,y)=g(\gamma_2u(x)+\delta_2v(y)),
\]
where $\hat{f}$ is also monic. Hence ${P}(x,y)=\hat{f}(\hat{\delta}_1v(x)+\hat{\gamma}_1u(y))$.
Now we apply Lemma~\ref{lem: uniqueness1} to $P(x,y)$ to conclude the proof for this special case.
We treat the situation where we are in case (iii) of Theorem~\ref{thm:main} similarly, replacing the role of Lemma~\ref{lem: uniqueness1} by that of Lemma~\ref{lem: uniqueness2}. The exactness part of Theorem~\ref{thm:sym} follows easily from exactness part of Theorem~\ref{thm:main}.
\end{proof}

\begin{proof}[Proof of Theorem~\ref{thm: new main}]
It suffices to prove the last assertion as the earlier part is a special case of Theorem~\ref{thm:sym}. The forward direction is immediate, and the backward direction follows from Proposition~\ref{prop: additivemultiplicativenonexpanding}.
\end{proof}

\begin{proposition} \label{prop: transfer}

Suppose $K$ is a field with $\fchar K =0$. We obtain weakened analogues of Theorem~\ref{thm:main}, Theorem~\ref{thm:sym}, and Theorem~\ref{thm: new main}, with ``exactly'' replaced by ``at least'' in the respective statements. Moreover, when $K$ is algebraically closed or real closed, then the full analogues of these results hold.

\end{proposition}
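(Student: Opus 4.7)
I would proceed in two stages: first, handle algebraically closed and real closed $K$ of characteristic $0$ by model-theoretic transfer; then, descend to arbitrary characteristic $0$ fields using the uniqueness lemmas together with Galois theory.

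For each fixed $n$ and fixed bounds $d_1 = \deg P$, $d_2 = \deg Q$, the conclusion of Theorem~\ref{thm:main} can be phrased as a single first-order sentence in the language of rings with the coefficients of $P, Q$ as parameters: case (i) for this $n$ becomes a universal statement over $2n$-tuples together with a first-order cardinality condition on the $n^2$-element image; cases (ii) and (iii) are existential statements over coefficients of auxiliary polynomials $f, g, u, v$ whose degrees are bounded in terms of $d_1, d_2$ (e.g., via $\deg P = \deg f \cdot \max(\deg u, \deg v)$); and the mutual exclusivity claim is a propositional combination. Completeness of $\mathrm{ACF}_0$ (the Lefschetz principle) then transfers the sentence from $\CC$ to any algebraically closed $K$ of characteristic $0$, while completeness of $\mathrm{RCF}$ transfers it from $\RR$ to any real closed $K$. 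The analogues of Theorems~\ref{thm:sym} and~\ref{thm: thm 1.3} are handled identically.

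For general $K$ of characteristic $0$, fix an embedding $K \hookrightarrow \bar K$ into the algebraic closure and apply the algebraically closed version just established to $P, Q \in \bar K[x,y]$, obtaining one of $\mathrm{(i)}_{\bar K}, \mathrm{(ii)}_{\bar K}, \mathrm{(iii)}_{\bar K}$. Case $\mathrm{(i)}_{\bar K}$ implies $\mathrm{(i)}_K$ trivially since $A, B \subseteq K \subseteq \bar K$. If $\mathrm{(ii)}_{\bar K}$ holds, write $P = f(\gamma_1 u(x) + \delta_1 v(y))$ and $Q = g(\gamma_2 u(x) + \delta_2 v(y))$ over $\bar K$; by absorbing scalars, translating arguments, and extracting appropriate $n$-th roots of leading coefficients in $\bar K$, I would rewrite these as normalized decompositions $P = F(U_1(x) + V_1(y))$ and $Q = G(U_2(x) + V_2(y))$ with $F, G$ monic and $U_i, V_i$ of zero constant term. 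Lemma~\ref{lem: uniqueness1} then guarantees that these are the unique normalized decompositions of $P$ and $Q$. For any $\sigma \in \mathrm{Gal}(\bar K/K)$, applying $\sigma$ to each decomposition preserves the normalization, so uniqueness forces $\sigma$ to fix $F, G, U_i, V_i$; hence all six polynomials lie in $K[x]$. Since $U_2 / U_1$ and $V_2 / V_1$ are constants in $\bar K$ (inherited from $U_i$ being scalar multiples of the common $u$, and similarly for $v$), and both numerators and denominators have coefficients in $K$, these ratios actually lie in $K$; taking $u := U_1$, $v := V_1$, $(\gamma_1, \delta_1) := (1, 1)$, and $(\gamma_2, \delta_2)$ equal to the two ratios yields $\mathrm{(ii)}_K$. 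An entirely analogous argument, using Lemma~\ref{lem: uniqueness2} in place of Lemma~\ref{lem: uniqueness1}, descends $\mathrm{(iii)}_{\bar K}$ to $\mathrm{(iii)}_K$.

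Mutual exclusivity must be weakened to ``at least one'' because $\mathrm{(i)}_K$ is strictly weaker than $\mathrm{(i)}_{\bar K}$: the bad configurations $A, B \subseteq \bar K$ constructed in the proof of Theorem~\ref{thm:main} (cases (ii)--(iii)) involve preimages under $u, v$ which need not lie in $K$, so $\mathrm{(i)}_K$ may coexist with $\mathrm{(ii)}_K$ or $\mathrm{(iii)}_K$. By contrast, $\mathrm{(ii)}_K$ and $\mathrm{(iii)}_K$ remain mutually exclusive since each implies the corresponding case over $\bar K$, where they are already exclusive. The principal technical obstacle is arranging the normalization over $\bar K$ so that the uniqueness lemmas apply coherently to both $P$ and $Q$ simultaneously, ensuring that the \emph{shared} data ($u$ and $v$ common to the two decompositions) descend to $K$ along with the individual polynomial data; once this is set up, the Galois descent is immediate from Lemmas~\ref{lem: uniqueness1} and~\ref{lem: uniqueness2}.
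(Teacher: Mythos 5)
Your proposal follows the same two-stage strategy as the paper: transfer the first-order content of Theorem~\ref{thm:main} (for each fixed $n$ and degree bound) to arbitrary algebraically closed or real closed fields of characteristic $0$ via completeness of $\mathrm{ACF}_0$ and $\mathrm{RCF}$, then descend from $K^{\mathrm{a}}$ to $K$ by Galois theory via Lemmas~\ref{lem: uniqueness1} and~\ref{lem: uniqueness2}. Your explicit explanation of why ``exactly'' must weaken to ``at least'' --- case (i) over $K$ is strictly weaker than over $K^{\mathrm{a}}$ because the witnessing sets $A,B$ need not lie in $K$, while (ii) and (iii) stay mutually exclusive as they push forward to $K^{\mathrm{a}}$ --- is cleaner than the paper's terse treatment.

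There is one point where your sketch (and, somewhat less acutely, the paper's own proof) is incomplete. The normalization ``$F,G$ monic, $U_i,V_i$ zero constant term'' does \emph{not} make the decomposition unique: for any $c\in K^{\mathrm{a}}$ with $c^{\deg F}=1$, the triple $\bigl(F(c\,\cdot),\,U_1/c,\,V_1/c\bigr)$ is another decomposition satisfying the same normalization. For instance, $(x+y)^2=f(u(x)+v(y))$ both with $(f,u,v)=(t^2,\,x,\,y)$ and with $(t^2,\,-x,\,-y)$; this shows that the step ``$c=1$ since $f,\hat f$ are monic'' in the proof of Lemma~\ref{lem: uniqueness1} really only yields $c^{\deg f}=1$. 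Consequently $\sigma\in\mathrm{Gal}(K^{\mathrm{a}}/K)$ sends $(F,U_1,V_1)$ to a $\mu_{\deg F}$-twist $\bigl(F(c_\sigma\,\cdot),\,U_1/c_\sigma,\,V_1/c_\sigma\bigr)$, so $U_i,V_i$ need not lie in $K[x]$ as you assert, and the ratio $U_2/U_1$ picks up a factor $c_\sigma/c'_\sigma$ under $\sigma$, which need not be $1$. The paper's normalization --- keeping $u,v$ themselves monic with zero constant term and carrying the scalars $\gamma_i,\delta_i$ separately --- does force $u=u^\sigma$, $v=v^\sigma$ by comparing leading coefficients in $\gamma_1 u=c_\sigma\gamma_1^\sigma u^\sigma$, hence $u,v\in K[x]$; but $f,g,\gamma_i,\delta_i$ still twist by $c_\sigma,c'_\sigma$. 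The correct conclusion is that the twist-invariant data $\delta_1/\gamma_1$, $\delta_2/\gamma_2$, $f(\gamma_1 t)$, $g(\gamma_2 t)$ descend to $K$ resp.\ $K[t]$, and reassemble as $P=F\bigl(u+(\delta_1/\gamma_1)v\bigr)$, $Q=G\bigl(u+(\delta_2/\gamma_2)v\bigr)$. You should spell out this bookkeeping rather than invoke uniqueness of the normalized decomposition as a black box, since as stated that uniqueness fails.
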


\begin{proof}

We will only prove the proposition for the analogue of Theorem~\ref{thm:main}; the deduction of the analogues of the other two statement from this is similar to what we have done earlier in this section. For a given pair $(d_1, d_2)$ and  $n \in \NN^{\geq 1}$, Theorem~\ref{thm:main} implies the following when $K$ is $\RR$ or $\CC$:   For all tuples $(c_{1, k,l})_{ 0 \leq k+l \leq d_1}$ and $(c_{2,k,l})_{ 0 \leq k+l \leq d_2}$ of elements in $K$, and all subsets $A$ and $B$ of $K$ with $|A| =|B| =n$, with $P(x,y) = \sum_{0 \leq k+l \leq d_1}c_{1, k,l}x^ky^l$ and $Q(x,y) = \sum_{0 \leq k+l \leq d_2}c_{2, k,l}x^{k}y^{l}$
the inequality 
$$\max\{ |P(A,B)|, |Q(A,B)|\} \leq \alpha(\deg P, \deg Q) n^{5/4}$$ implies that $P(x,y)$ and $Q(x,y)$ form either an additive pair or a multiplicative pair. For each such $(d_1, d_2, n)$, the preceding statement admits a first-order expression in the language of fields. Hence, it also holds in all algebraically closed fields of char $0$ and real closed fields, as the respective theories are complete. Therefore, in all algebraically closed fields of char $0$ and real closed fields, the negation of (i) implies (ii) or (iii) in the corresponding analogues of Theorem~\ref{thm:main}. The ``exactness'' part for algebraically closed and real closed fields are simpler and can be achieved similarly.

Now fix a field $K$ with $\fchar K =0$,  and suppose there is $n \in \NN^{\geq 1}$, and finite subsets $A$ and $B$ of $K$ with $|A|=|B|= n$ such that 
$$  \max\{  |P(A,B)|, |Q(A, B)|  \} \geq \alpha(\deg P, \deg Q) n^{ 5/4}.$$
Let $K^\text{a}$ be the algebraic closure of $K$.  Viewing $P(x,y)$ and $Q(x,y)$ as elements of $K^\text{a}[x,y]$ and applying the analogue of 
 Theorem~\ref{thm:main}, we get that either
\[
P=f(\gamma_1u(x)+\delta_1v(y)) \quad\text{ and }\quad Q=g(\gamma_2u(x)+\delta_2 v(y))
\]
or
\[
P=f(u^{m_1}(x)v^{n_1}(y))\quad\text{ and }\quad Q=g(u^{m_2}(x)v^{n_2}(y)),
\]
where $f$, $g$,  $u$, and $v$ are univariate polynomials with coefficients in $K^\text{a}$, $\gamma_1$, $\gamma_2$, $\delta_1$, and $\delta_2$ are in $K^\text{a}$, and $m_1$, $m_2$, $n_1$, and $n_2$ are in $\NN^{\geq 1}$. In the additive case, we can moreover arrange that $f$ and $g$ are monic, and $u$ and $v$ are monic and have zero constant coefficients; in the multiplicative case, we can arrange that $u$ and $v$ are monic, $\gcd(m_1, n_1)=\gcd(m_2,n_2)=1$ and $u(x)v(y)$ can not be written $\tilde{u}^n(x)\tilde{v}^n(y)$ where $\tilde{u}$ and $\tilde{v}$ are univariate polynomials over $K$, and $n \geq 2$. We will show that then $f$, $g$, $u$, and $v$ have coefficients in $K$, and $\gamma_1$, $\gamma_2$, $\delta_1$, $\delta_2$ are in $K$.

Let $G=\mathrm{Aut}(K^\text{a}/K)$ be the absolute Galois group of $K$. It suffices to show that the natural actions of $G$ on $K^{\text{a}}$ and the ring of univariate polynomials with coefficients in $K^{\text{a}}$ fix $f$, $g$, $u$, $v$, $\gamma_1$, $\gamma_2$, $\delta_1$, and $\delta_2$.  We treat $P(x, y)$ in the additive case.  Let $\sigma$ be in $G$. As $P(x,y)$ is in $K[x,y]$, it is fixed by $\sigma$. Hence $$f(\gamma_1u(x)+\delta_1v(y)) = f^\sigma(\gamma_1^\sigma u^\sigma(x)+ \delta_1^\sigma v^\sigma(y))$$ where we use  exponential notation for group actions. Applying  Lemma~\ref{lem: uniqueness1} in $K^\text{a}$, we get $f = f^\sigma$, $u=u^\sigma$, $v=v^\sigma$, $\gamma_1 = \gamma^\sigma_1$, and $\delta_1=\delta_1^\sigma$. We can deal with $Q(x,y)$ identically. The multiplicative case is similar, but  using  Lemma~\ref{lem: uniqueness2} instead of Lemma~\ref{lem: uniqueness1}.
\end{proof}



\begin{remark} \label{rem: ending}
There is evidence that the full analogues of Theorem~\ref{thm:main}, Theorem~\ref{thm:sym}, and Theorem~\ref{thm: new main} do not hold in all fields of characteristic $0$. For example, $x^2+y^2$ is not 1-expanding over $\CC$, but $1$-expanding over $\QQ$ due to a result by Chang~\cite{Chang}.

By the model-theoretic transfer principle (Fact~\ref{fact: model trans}), for each $n$ there is $N$ such that if $A,B \subseteq K$ with  $|A|=|B|=n$, $K$ has positive characteristic at least  $N$ , and  $$\max\{  |P(A,B)|, |Q(A, B)|  \} \leq \alpha(\deg P, \deg Q) n^{ 5/4},$$ then $P(x,y)$ and $Q(x,y)$ must form either an additive  or a multiplicative pair. In more details, for the given $n$, we can obtain a first order statement $\sigma$ indicating that one the three possibilities in Proposition~\ref{prop: transfer} must hold for $n$. Then the statement $\sigma$ holds in all fields of characteristic $0$, so $N$ can be chosen to be $N(\sigma)$ as in Fact~\ref{fact: model trans}.

In the above discussion for fields with positive characteristic, $n$ is generally very small compared to size of the field. There are more involved results due to Tao  along similar lines when $|A|=|B|=n$ is close to the size of the field \cite{Tao}. That suggests that similar results about arbitrary pairs of polynomials should also be true for intermediate values of $n$ in finite fields, but no proof is currently known.
\end{remark}


\section*{Acknowledgments} 
The authors thank Artem Chernikov, Lou van den Dries, Ehud Hrushovski, Brendan Murphy, Sergei Starchenko, Terence Tao, and Frank de Zeeuw  for helpful comments and discussions. Frank was especially generous and outlined for us the proof of certain facts well-known in the field but not explicitly recorded in the literature. In addition,
the authors thank anonymous referees for carefully reading the manuscript and providing many helpful comments.

\bibliographystyle{amsplain}
\bibliography{refs}

\providecommand{\bysame}{\leavevmode\hbox to3em{\hrulefill}\thinspace}
\providecommand{\MR}{\relax\ifhmode\unskip\space\fi MR }
\providecommand{\MRhref}[2]{%
  \href{http://www.ams.org/mathscinet-getitem?mr=#1}{#2}
}
\providecommand{\href}[2]{#2}
\begin{thebibliography}{10}

\bibitem{BasuRaz}
Saugata Basu and Orit~E. Raz, \emph{An o-minimal {S}zemer\'{e}di-{T}rotter
  theorem}, Q. J. Math. \textbf{69} (2018), no.~1, 223--239. \MR{3771391}

\bibitem{MartinBays}
Martin Bays and Emmanuel Breuillard, \emph{Projective geometries arising from
  {E}lekes-{S}zab\'{o} problems}, Ann. Sci. \'{E}c. Norm. Sup\'{e}r. (4)
  \textbf{54} (2021), no.~3, 627--681. \MR{4311096}

\bibitem{BGT}
Emmanuel Breuillard, Ben Green, and Terence Tao, \emph{The structure of
  approximate groups}, Publ. Math. Inst. Hautes \'{E}tudes Sci. \textbf{116}
  (2012), 115--221. \MR{3090256}

\bibitem{BT12}
Boris Bukh and Jacob Tsimerman, \emph{Sum-product estimates for rational
  functions}, Proc. Lond. Math. Soc. (3) \textbf{104} (2012), no.~1, 1--26.
  \MR{2876962}

\bibitem{Chang}
Mei-Chu Chang, \emph{On problems of {E}rd\"{o}s and {R}udin}, J. Funct. Anal.
  \textbf{207} (2004), no.~2, 444--460. \MR{2032997}

\bibitem{Chen}
Yong-Gao Chen, \emph{On sums and products of integers}, Proc. Amer. Math. Soc.
  \textbf{127} (1999), no.~7, 1927--1933. \MR{1600124}

\bibitem{ArtemSergeiESomin}
Artem Chernikov, Ya'acov Peterzil, and Sergei Starchenko,
  \emph{{Model-theoretic Elekes-Szab{\'o} for stable and o-minimal
  hypergraphs}}, arXiv e-prints (2021), arXiv:2104.02235.

\bibitem{ArtemSergeiESstronglymin}
Artem Chernikov and Sergei Starchenko, \emph{Model-theoretic
  {E}lekes-{S}zab\'{o} in the strongly minimal case}, J. Math. Log. \textbf{21}
  (2021), no.~2, Paper No. 2150004, 20. \MR{4290493}

\bibitem{survey}
Frank de~Zeeuw, \emph{A survey of {E}lekes-{R}\'{o}nyai-type problems}, New
  trends in intuitive geometry, Bolyai Soc. Math. Stud., vol.~27, J\'{a}nos
  Bolyai Math. Soc., Budapest, 2018, pp.~95--124. \MR{3889258}

\bibitem{Elekes97}
Gy\"{o}rgy Elekes, \emph{On the number of sums and products}, Acta Arith.
  \textbf{81} (1997), no.~4, 365--367. \MR{1472816}

\bibitem{ENR}
Gy\"{o}rgy Elekes, Melvyn~B. Nathanson, and Imre~Z. Ruzsa, \emph{Convexity and
  sumsets}, J. Number Theory \textbf{83} (2000), no.~2, 194--201. \MR{1772612}

\bibitem{Elekes00}
Gy\"{o}rgy Elekes and Lajos R\'{o}nyai, \emph{A combinatorial problem on
  polynomials and rational functions}, J. Combin. Theory Ser. A \textbf{89}
  (2000), no.~1, 1--20. \MR{1736139}

\bibitem{ES83}
P.~Erd\H{o}s and E.~Szemer\'{e}di, \emph{On sums and products of integers},
  Studies in pure mathematics, Birkh\"{a}user, Basel, 1983, pp.~213--218.
  \MR{820223}

\bibitem{Ford}
Kevin Ford, \emph{Sums and products from a finite set of real numbers},
  Ramanujan J. \textbf{2} (1998), no.~1-2, 59--66. \MR{1642873}

\bibitem{Fox17}
Jacob Fox, J\'{a}nos Pach, Adam Sheffer, Andrew Suk, and Joshua Zahl, \emph{A
  semi-algebraic version of {Z}arankiewicz's problem}, J. Eur. Math. Soc.
  (JEMS) \textbf{19} (2017), no.~6, 1785--1810. \MR{3646875}

\bibitem{Hrushovski}
Ehud Hrushovski, \emph{Stable group theory and approximate subgroups}, J. Amer.
  Math. Soc. \textbf{25} (2012), no.~1, 189--243. \MR{2833482}

\bibitem{Hrushovski2}
\bysame, \emph{On pseudo-finite dimensions}, Notre Dame J. Form. Log.
  \textbf{54} (2013), no.~3-4, 463--495. \MR{3091666}

\bibitem{KonyaginShkredov}
S.~V. Konyagin and I.~D. Shkredov, \emph{On sum sets of sets having small
  product set}, Proc. Steklov Inst. Math. \textbf{290} (2015), no.~1, 288--299,
  Published in Russian in Tr. Mat. Inst. Steklova {{\bf{2}}90} (2015),
  304--316. \MR{3488800}

\bibitem{KonyaginShkredov2}
\bysame, \emph{New results on sums and products in {$\mathbb {R}$}}, Tr. Mat.
  Inst. Steklova \textbf{294} (2016), no.~Sovremennye Problemy Matematiki,
  Mekhaniki i Matematichesko\u{\i} Fiziki. II, 87--98, English version
  published in Proc. Steklov Inst. Math. {{\bf{2}}94} (2016), no. 1, 78--88.
  \MR{3628494}

\bibitem{Marker}
David Marker, \emph{Model theory}, Graduate Texts in Mathematics, vol. 217,
  Springer-Verlag, New York, 2002, An introduction. \MR{1924282}

\bibitem{MPVZ}
Hossein~Nassajian Mojarrad, Thang Pham, Claudiu Valculescu, and Frank de~Zeeuw,
  \emph{Schwartz-{Z}ippel bounds for two-dimensional products}, Discrete Anal.
  (2017), Paper No. 20, 20. \MR{3742397}

\bibitem{Nathanson}
Melvyn~B. Nathanson, \emph{On sums and products of integers}, Proc. Amer. Math.
  Soc. \textbf{125} (1997), no.~1, 9--16. \MR{1343715}

\bibitem{RSS16}
Orit~E. Raz, Micha Sharir, and J\'{o}zsef Solymosi, \emph{Polynomials vanishing
  on grids: the {E}lekes-{R}\'{o}nyai problem revisited}, Amer. J. Math.
  \textbf{138} (2016), no.~4, 1029--1065. \MR{3538150}

\bibitem{RudnevShkredovStevens}
Misha Rudnev, Ilya~D. Shkredov, and Sophie Stevens, \emph{On the energy variant
  of the sum-product conjecture}, Rev. Mat. Iberoam. \textbf{36} (2020), no.~1,
  207--232. \MR{4061987}

\bibitem{Shakan}
George {Shakan}, \emph{{On higher energy decompositions and the sum-product
  phenomenon}}, Math. Proc. Cambridge Philos. Soc. (2018), 1--19.

\bibitem{Shen}
Chun-Yen Shen, \emph{Algebraic methods in sum-product phenomena}, Israel J.
  Math. \textbf{188} (2012), 123--130. \MR{2897726}

\bibitem{Solymosisoloearlier}
J\'{o}zsef Solymosi, \emph{On the number of sums and products}, Bull. London
  Math. Soc. \textbf{37} (2005), no.~4, 491--494. \MR{2143727}

\bibitem{Solymosisolo}
\bysame, \emph{Bounding multiplicative energy by the sumset}, Adv. Math.
  \textbf{222} (2009), no.~2, 402--408. \MR{2538014}

\bibitem{SolymosideZeeuw}
J\'{o}zsef Solymosi and Frank de~Zeeuw, \emph{Incidence bounds for complex
  algebraic curves on {C}artesian products}, New trends in intuitive geometry,
  Bolyai Soc. Math. Stud., vol.~27, J\'{a}nos Bolyai Math. Soc., Budapest,
  2018, pp.~385--405. \MR{3889269}

\bibitem{ST12}
J\'{o}zsef Solymosi and Terence Tao, \emph{An incidence theorem in higher
  dimensions}, Discrete Comput. Geom. \textbf{48} (2012), no.~2, 255--280.
  \MR{2946447}

\bibitem{Szekely}
L\'{a}szl\'{o}~A. Sz\'{e}kely, \emph{Crossing numbers and hard {E}rd{\H o}s
  problems in discrete geometry}, Combin. Probab. Comput. \textbf{6} (1997),
  no.~3, 353--358. \MR{1464571}

\bibitem{SzT}
Endre Szemer\'{e}di and William~T. Trotter, Jr., \emph{Extremal problems in
  discrete geometry}, Combinatorica \textbf{3} (1983), no.~3-4, 381--392.
  \MR{729791}

\bibitem{Tao}
Terence Tao, \emph{Expanding polynomials over finite fields of large
  characteristic, and a regularity lemma for definable sets}, Contrib. Discrete
  Math. \textbf{10} (2015), no.~1, 22--98. \MR{3386249}

\bibitem{TentZiegler}
Katrin Tent and Martin Ziegler, \emph{A course in model theory}, Lecture Notes
  in Logic, vol.~40, Association for Symbolic Logic, La Jolla, CA; Cambridge
  University Press, Cambridge, 2012. \MR{2908005}

\bibitem{Lou}
Lou van~den Dries, \emph{Tame topology and o-minimal structures}, London
  Mathematical Society Lecture Note Series, vol. 248, Cambridge University
  Press, Cambridge, 1998. \MR{1633348}

\bibitem{Miguel}
Miguel~N. Walsh, \emph{The polynomial method over varieties}, Invent. Math.
  \textbf{222} (2020), no.~2, 469--512. \MR{4160873}

\bibitem{Zahl}
Joshua Zahl, \emph{A {S}zemer\'{e}di-{T}rotter type theorem in {$\mathbb
  {R}^4$}}, Discrete Comput. Geom. \textbf{54} (2015), no.~3, 513--572.
  \MR{3392965}

\end{thebibliography}



\begin{dajauthors}
\begin{authorinfo}[yifan]
  Yifan Jing\\
  Mathematical Institute, University of Oxford\\
  Oxford, UK\\
  yifan\imagedot{}jing\imageat{}maths\imagedot{}ox\imagedot{}ac\imagedot{}uk \\
  \url{https://yifanjing.wordpress.com/}
\end{authorinfo}
\begin{authorinfo}[souktik]
  Souktik Roy\\
  Department of Mathematics, University of Illinois at Urbana-Champaign\\
  Urbana IL, USA\\
  souktik2\imageat{}illinois\imagedot{}edu \\
  \url{https://math.illinois.edu/directory/profile/souktik2}
\end{authorinfo}
\begin{authorinfo}[minh]
  Chieu-Minh Tran\\
  Department of Mathematics, National University of Singapore\\
  Singapore\\
  trancm\imageat{}nus\imagedot{}edu\imagedot{}sg\\
  \url{https://blog.nus.edu.sg/tranchieuminhchieutran/}
\end{authorinfo}
\end{dajauthors}

\end{document}